\newcommand{\scrA}{\mathscr{A}}
\newcommand{\scrR}{\mathscr{R}}
\newcommand{\ol}[1]{\overline{#1}}
\newtheorem{theorem}{Theorem}
\newtheorem{corollary}{Corollary}
\newtheorem{proposition}{Proposition}
\newtheorem{lemma}{Lemma}
\begin{document}
\begin{center}

   \title[A Note on Small Overlap Monoids]{A Note on the Definition of \\ Small Overlap Monoids}
    \keywords{monoid, semigroup, word problem, finite presentation, small overlap, small cancellation}
    \subjclass[2000]{20M05}

   \maketitle

    Mark Kambites \\

    \medskip

    School of Mathematics, \ University of Manchester, \\
    Manchester M13 9PL, \ England.

    \medskip

    \texttt{Mark.Kambites@manchester.ac.uk} \\

    \medskip

\end{center}

\begin{abstract}
Small overlap conditions are simple and natural combinatorial conditions
on semigroup and monoid presentations, which serve to limit the complexity
of derivation sequences between equivalent words in the generators. They
were introduced by J.~H.~Remmers, and more recently have been extensively
studied by the present author. However, the definition of small overlap
conditions hitherto used by the author was slightly more restrictive than
that introduced by Remmers; this note eliminates this discrepancy by extending
the recent methods and results of the author to apply to Remmers' small overlap monoids in full generality.
\end{abstract}

Small overlap conditions are simple and natural combinatorial conditions
on semigroup and monoid presentations, which serve to limit the complexity
of derivation sequences between equivalent words in the generators. Introduced
by J.~H.~Remmers \cite{Higgins92,Remmers71,Remmers80}, and more recently
studied by the present author \cite{K_generic,K_smallover1,K_smallover2},
they are the natural semigroup-theoretic analogue of the small cancellation
conditions widely used in combinatorial group theory \cite{Lyndon77}.

The definitions of small overlap conditions originally introduced by
Remmers are slightly more general than those used by the present author.
The aims of this note are to clarify this distinction, and then to extend
the methods and results introduced in \cite{K_smallover1,K_smallover2} to
the full generality of small overlap monoids as studied by Remmers.

In addition to this introduction, this article comprises three sections.
In Section~\ref{sec_prelim} we briefly recall the definitions of small
overlap conditions, and also discuss the distinction between Remmers' and
the author's definitions. In Section~\ref{sec_main} we show how to extend
the key technical results from \cite{K_smallover1}, from the slightly
restricted setting considered there to Remmers' small overlap conditions
in their more general form. Finally, Section~\ref{sec_apps} applies the
results of the previous section to extend the main results of
\cite{K_smallover1,K_smallover2} to the more general case.

The proofs for certain of the results in this paper are very similar (in
some cases identical) to arguments used in previous papers \cite{K_smallover1,K_smallover2}. In the interests of brevity we refrain
from repeating these, instead providing detailed references. Hence, while the
results of this paper may be read in isolation, the reader wishing to fully
understand the proofs is advised to read it in conjunction with
\cite{K_smallover1,K_smallover2}.

\section{Small Overlap Monoids}\label{sec_prelim}

We assume familiarity with basic notions of combinatorial semigroup
theory, including free semigroups and monoids, and semigroup and monoid
presentations. Except where stated otherwise, we assume we have a fixed
finite presentation for a monoid (or semigroup, the difference being
unimportant). Words are assumed to be drawn from the free monoid on the
generating alphabet unless otherwise stated. We write $u = v$ to indicate that two words are
equal in the free monoid or semigroup, and $u \equiv v$ to indicate that they represent
the same element of the monoid or semigroup presented. We say that a word $p$ is a
\textit{possible prefix} of $u$ if there exists a (possibly empty) word
$w$ with $pw \equiv u$, that is, if the element represented by $u$ lies in
the right ideal generated by the element represented by $p$. The empty
word is denoted $\epsilon$.

A \textit{relation word} is a word which occurs as one side of a
relation in the presentation. A \textit{piece} is a word in the
generators which occurs as a factor in sides of two \textit{distinct} relation
words, or in two different (possibly overlapping) places within one
side of a relation word. Note that this definition differs slightly from
that used in \cite{K_smallover1,K_smallover2} in the presence of the word
``distinct''; we shall discuss the
significance of this shortly. By convention, the empty word is always a piece.
We say that a presentation is \textit{weakly $C(n)$}, where
$n$ is a positive integer, if no relation word can be written as the product
of \textit{strictly fewer than} $n$ pieces. Thus for each $n$, being weakly
$C(n+1)$ is a stronger condition than being weakly $C(n)$.

In \cite{K_smallover1,K_smallover2} we used a slightly
more general definition of a piece, following through with which led
to slightly more
restrictive conditions $C(n)$; the author is grateful to Uri Weiss for
pointing out this discrepancy.
Specifically, in \cite{K_smallover1,K_smallover2} we defined a piece to be a
word which
occurs more than once as a factor of words in the \textit{sequence} of
relation words. Under this definition, if the same relation word appears
twice in a presentation then it is considered to be a piece, and so the
presentation fails to satisfy $C(2)$. By contrast, Remmers defined a piece
to be a word which appears more than once as a factor of words in the
\textit{set} of relation words. The effect of this is that Remmers' definition
permits $C(2)$ (and higher) presentations to have relations of, for example,
the form $(u, v_1)$ and $(u, v_2)$ with $v_1 \neq v_2$. (Equivalently, one
could choose to define a piece in terms of the sequence of relation words
but permit ``$n$-ary'' relations of the form $(u,v_1,v_2)$, to be interpreted
as equivalent to relations $(u,v_1)$ and $(u,v_2)$). In this paper, we say
that a presentation is \textit{strongly} $C(n)$ if it is weakly $C(n)$ and
has no repeated relation words, that is, if it satisfies the condition which
was called \textit{C(n)} in \cite{K_smallover1,K_smallover2}.

In fact it transpires that the weakly $C(n)$ conditions still suffice
to establish the main methods and results of \cite{K_smallover1,K_smallover2}. However,
this fact is rather obscured by the technical details and notation in
\cite{K_smallover1,K_smallover2}. In particular, for a relation word $R$ we
defined $\ol{R}$ to be the (necessarily unique) word such that $R = \ol{R}$ or $\ol{R} = R$ is a relation in the
presentation. The extensive use of this notation makes it difficult
to convince oneself that the arguments in \cite{K_smallover1,K_smallover2}
do indeed apply in the more general case, so the aim of this paper is to
provide full proofs of the results of those papers in the more general setting.

For each relation word $R$, let $X_R$ and $Z_R$ denote respectively the
longest prefix of $R$ which is a piece, and the longest suffix of $R$
which is a piece. If the presentation is weakly $C(3)$ then $R$ cannot be
written as a product of two pieces, so this prefix and suffix cannot meet;
thus, $R$ admits a factorisation $X_R Y_R Z_R$ for some non-empty word $Y_R$.
If moreover the presentation is weakly $C(4)$, then the relation word $R$
cannot be written as a product of three pieces, so $Y_R$ is not a piece. The
converse also holds: a weakly $C(3)$ presentation such that no $Y_R$ is a piece 
is a weakly $C(4)$ presentation. We
call $X_R$, $Y_R$ and $Z_R$ the \textit{maximal piece prefix}, the
\textit{middle word} and the \textit{maximal piece suffix} respectively
of $R$.

Assuming now that the presentation is weakly $C(3)$,
we shall use the letters $X$, $Y$ and $Z$ (sometimes with adornments or
subscripts) exclusively to represent maximal piece prefixes, middle words
and maximal piece suffixes respectively of relation words; two such letters
with the same subscript or adornment (or with none) will be assumed to
stand for the appropriate factors of the same relation word.

We say that a relation word $\ol{R}$ is a \textit{complement} of a relation
$R$ if there are relation words $R = R_1, R_2, \dots, R_n = \ol{R}$ such
that either $(R_i, R_{i+1})$ or $(R_{i+1}, R_i)$ is a relation in the
presentation for $1 \leq i < n$. We say that $\ol{R}$ is a \textit{proper}
complement of $R$ if, in addition, $\ol{R} \neq R$. Abusing notation and
terminology slightly, if $R = X_R Y_R Z_R$ and $\ol{R} = X_{\ol{R}} Y_{\ol{R}} Z_{\ol{R}}$ then we
write $\ol{X_R} = X_{\ol{R}}$, $\ol{X_R Y_R} = X_{\ol{R}} Y_{\ol{R}}$ and so
forth. We say that $\ol{X_R}$ is a complement of $X_R$, and $\ol{X_R Y_R}$
is a complement of $X_R Y_R$.

A \textit{relation prefix} of a word
is a prefix which admits a (necessarily unique, as a consequence of the
small overlap condition) factorisation of the form $a X Y$ where $X$ and $Y$
are the maximal piece prefix and middle word respectively of some relation
word $XYZ$. An \textit{overlap prefix (of length $n$)} of
a word $u$ is a relation prefix which admits an (again necessarily unique)
factorisation of the form $b X_1 Y_1' X_2 Y_2' \dots X_n Y_n$ where
\begin{itemize}
\item $n \geq 1$;
\item $b X_1 Y_1' X_2 Y_2' \dots X_n Y_n$ has no factor of the form $X_0Y_0$,
where $X_0$ and $Y_0$ are the maximal piece prefix and middle word respectively
of some relation word, beginning before the end of the prefix $b$;
\item for each $1 \leq i \leq n$, $R_i = X_i Y_i Z_i$ is a relation word with
$X_i$ and $Z_i$ the maximal piece prefix and suffix respectively; and
\item for each $1 \leq i < n$, $Y_i'$ is a proper, non-empty prefix of $Y_i$.
\end{itemize}
Notice that if a word has a relation prefix, then the shortest such must
be an overlap prefix. A relation prefix $a XY$ of a word $u$ is called
 \textit{clean} if $u$ does \textbf{not} have a prefix
$$a XY' X_1 Y_1$$
where $X_1$ and $Y_1$ are the maximal piece prefix and middle word respectively
of some relation word, and $Y'$ is a proper, non-empty prefix of $Y$. As in
\cite{K_smallover1}, clean overlap prefixes will play a crucial role in what
follows.

If $u$ is a word and $p$ is a piece, we say that $u$ is \textit{$p$-active} if $p u$ has a relation prefix
$aXY$ with $|a| < |p|$, and \textit{$p$-inactive} otherwise.

\section{Technical Results}\label{sec_main}

In this section we show how some technical results and methods from
\cite{K_smallover1} concerning strongly $C(4)$ monoids
can be extended to cover weakly $C(4)$ monoids. We assume
throughout initially a fixed monoid presentation which is weakly $C(4)$.
The following three
foundational statements are completely unaffected by our revised definitions,
and can still be proved exactly as in \cite{K_smallover1}.

\begin{proposition}\label{prop_overlapprefixnorel}
Let $a X_1 Y_1' X_2 Y_2' \dots X_n Y_n$
be an overlap prefix of some word. Then this prefix
contains no relation word as a factor, except possibly the suffix $X_n Y_n$ in
the case that $Z_n = \epsilon$.
\end{proposition}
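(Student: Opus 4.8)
The plan is to take an arbitrary relation word $R = XYZ$ occurring as a factor of the overlap prefix $w = a X_1 Y_1' X_2 Y_2' \dots X_n Y_n$ and show that it must be the suffix $X_n Y_n$ with $Z_n = \epsilon$. The first move is to locate the occurrence of the relation-prefix structure $XY$. Since $X$ and $Y$ are the maximal piece prefix and middle word of the relation word $R$, the factor $XY$ is exactly a factor of the kind $X_0 Y_0$ which the second defining clause of an overlap prefix forbids from beginning before the end of $a$. Hence the occurrence of $XY$, and in particular of $X$, must begin at some position $s$ with $s \geq |a|$, i.e.\ within the segment $X_1 Y_1' X_2 Y_2' \dots X_n Y_n$.

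The second, and main, step is to show that this occurrence of $X$ is forced to align with a block, so that $X$ coincides with some $X_i$. Here I would exploit the maximality of piece prefixes together with the weak $C(4)$ condition: both $X$ and the relevant $X_i$ are maximal piece prefixes of relation words sitting at overlapping positions, and a misalignment (the occurrence of $X$ starting strictly inside a block, or failing to reach a block boundary) would allow the middle word $Y$, which is \emph{not} a piece because the presentation is weakly $C(4)$, to be expressed as a concatenation of the pieces $X_i$, $X_{i+1}$ and factors of the $Y_j$, contradicting either the maximality of $X$ or the fact that $Y$ is piece-free. This uses pieces only through their closure under taking factors and through the $C(4)$ property of middle words, neither of which is affected by passing from the strongly to the weakly $C(n)$ setting, since the notion of piece in play is Remmers' set-based one throughout; the argument is therefore the same as in \cite{K_smallover1}.

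Once the alignment $X = X_i$ is established, the middle word $Y$ begins exactly where $Y_i$ begins. If $i < n$, then $X_i$ is followed in $w$ by $Y_i'$, a proper nonempty prefix of $Y_i$, immediately followed by the piece $X_{i+1}$; comparing this with $Y$ (not a piece) and with the requirement that $Z$ still fit into $w$ after $Y$, the middle word $Y$ would have to straddle into $X_{i+1}$ and so split as a product of pieces, contradicting weak $C(4)$. This leaves $i = n$, whence $X = X_n$ and $Y = Y_n$; since nothing in $w$ follows $Y_n$, the factor $Z$ has no room and must be empty, giving $Z = Z_n = \epsilon$ and $R = X_n Y_n$, precisely the suffix allowed by the statement. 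The step I expect to be the genuine obstacle is the alignment argument of the second paragraph: ruling out occurrences of $X$ that start inside a block requires carefully combining the maximality of the maximal piece prefixes and suffixes with the non-piece property of middle words, and it is the only place where the $C(4)$ hypothesis is essential; everything else is bookkeeping with the overlap-prefix factorisation.
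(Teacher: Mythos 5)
Your opening step is fine and agrees with how this statement is actually proved (the paper itself defers the proof to \cite{K_smallover1}): the factor $XY$ of any relation word occurring in the prefix is of the forbidden form $X_0Y_0$, so by the second clause of the definition it must begin at or after the end of $a$. The fatal problem is the mechanism you invoke at the step you yourself flag as the crux. In both your second and third paragraphs the contradiction is supposed to come from $Y$ being ``expressed as a concatenation of pieces, contradicting either the maximality of $X$ or the fact that $Y$ is piece-free'' (resp.\ ``split as a product of pieces, contradicting weak $C(4)$''). This is a non sequitur twice over. First, a word which is not a piece can perfectly well be a product of pieces: not being a piece means $Y$ does not \emph{occur} as a factor in two distinct relation words or in two places in one relation word; it says nothing about factorisations of $Y$. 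Second, weak $C(4)$ only forbids writing a \emph{relation word} as a product of at most three pieces; exhibiting $Y$ as a product of two or more pieces merely writes $R=XYZ$ as a product of four or more pieces, which $C(4)$ allows. The only correct way to use the hypothesis that $Y$ is not a piece is positional: if the occurrence of $Y$ lies inside a factor of a \emph{single} relation word $R_i$ (e.g.\ inside one block $X_iY_i'$, which is a prefix of $R_i$), then $Y$ occurs in both $R$ and $R_i$, so non-pieceness forces $R_i = R$ as words \emph{and} forces the occurrence to sit at the canonical position of $Y$ inside $R$; misalignment then gives the contradiction. Your proposal never isolates this ``same word, same position'' principle, and the substitute you offer for it is false.

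Because of this, the genuinely hard case is not addressed at all. Suppose the occurrence does align perfectly with block $i<n$: $X = X_i$, $Y = Y_i$ starting right after $X_i$, so that $Y_i = Y_i'\gamma$ with $\gamma \neq \epsilon$ and the remainder $\gamma Z_i$ of $R_i$ matches a prefix of $X_{i+1}Y_{i+1}'\cdots X_nY_n$. Here there is no misalignment for your argument to feed on, $Y$ straddling into $X_{i+1}$ produces no contradiction by the remarks above, and yet this configuration must be excluded. Ruling it out requires the maximal piece \emph{suffixes}, which your proposal never uses: $\gamma Z_i$ is a suffix of $R_i$ strictly longer than $Z_i$, hence not a piece, so its occurrence as a prefix of the remaining blocks again forces (by the same-word-same-position principle) $R_{i+1} = R_i$ with the occurrence starting at the block boundary, producing a still longer non-piece suffix spilling further right; an induction along the blocks then shows the only configuration that survives is at block $n$ with $Z_n = \epsilon$, which is exactly the permitted exception. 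This chain argument down the blocks, driven by maximality of the $Z_i$'s, is the real content of the proof in \cite{K_smallover1}, and nothing in your outline plays its role.
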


\begin{proposition}\label{prop_opgivesmop}
Let $u$ be a word. Every overlap prefix of $u$ is contained in a
clean overlap prefix of $u$.
\end{proposition}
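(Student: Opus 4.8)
The plan is to argue by maximality, exploiting that $u$ has only finitely many prefixes. Given an overlap prefix $P = bX_1Y_1'\cdots X_nY_n$ of $u$, I would consider the finite non-empty set of all overlap prefixes of $u$ that contain $P$ as a prefix, and choose one, $Q = bX_1Y_1'\cdots X_mY_m$, of maximal length. It then suffices to show that $Q$ is clean, since $Q$ contains $P$ by construction. (Equivalently, one repeatedly extends $P$ by the procedure below; each extension strictly lengthens the prefix, so the process terminates at a clean overlap prefix.)

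Supposing $Q$ were not clean, its relation-prefix form $aX_mY_m$ would, by the definition of cleanliness, force $u$ to have a prefix $aX_mY_m'X_{m+1}Y_{m+1}$, with $Y_m'$ a proper non-empty prefix of $Y_m$ and $X_{m+1}, Y_{m+1}$ the maximal piece prefix and middle word of some relation word $R_{m+1} = X_{m+1}Y_{m+1}Z_{m+1}$. I would then propose $Q' = bX_1Y_1'\cdots X_mY_m'X_{m+1}Y_{m+1}$ as an overlap prefix of length $m+1$ and derive a contradiction with the maximality of $Q$. The first, third and fourth conditions in the definition of an overlap prefix hold for $Q'$ at once (the last because $Y_m'$ is proper and non-empty), so two things remain: that $Q'$ properly extends $Q$, and that $Q'$ has no factor $X_0Y_0$ beginning before the end of $b$.

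For the first point, since $Q$ and $Q'$ are both prefixes of $u$ one is a prefix of the other; if $Q'$ failed to properly extend $Q$, then $X_{m+1}Y_{m+1}$ would be a prefix of the non-empty remainder of $Y_m$ after $Y_m'$, hence a factor of $R_m$ occurring strictly inside it (the offset being positive because $Y_m'$ is non-empty). Whether or not $R_{m+1}=R_m$, this exhibits $X_{m+1}Y_{m+1}$ in two distinct places, so it is a piece; then $R_{m+1} = (X_{m+1}Y_{m+1})Z_{m+1}$ is a product of two pieces, contradicting weakly $C(3)$. Hence $Q$ is a proper prefix of $Q'$.

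I expect the real obstacle to be the second point. A hypothetical factor $W = X_0Y_0$ of $Q'$ beginning at a position $p < |b|$ cannot lie within $Q$ (which has no such factor), so it must extend past $Q$. I would compare the content of $u$ from position $|b|$ (which begins $X_1Y_1'\cdots$), viewed inside $R_0$ at the positive offset $|b|-p$, with the relation word $R_1 = X_1Y_1Z_1$, which that same content also matches along an initial segment containing $X_1Y_1'$. Truncating to the region where both matches are valid yields a word $C$ that is simultaneously a prefix of $R_1$ and a factor of $R_0$, with $|C| > |X_1|$. Since the offset $|b|-p$ is positive, $C$ occurs in two distinct places and so is a piece regardless of whether $R_0$ and $R_1$ coincide; being a prefix of $R_1$ strictly longer than $X_1$, it contradicts the maximality of $X_1$ as the piece prefix of $R_1$. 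With both points established, $Q'$ is a genuine overlap prefix containing $P$ and strictly longer than $Q$, contradicting maximality; hence $Q$ is clean, as required.
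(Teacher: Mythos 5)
Correct, and essentially the paper's own approach: the paper proves this proposition by deferring to \cite{K_smallover1}, where the argument is exactly this extension process---a non-clean overlap prefix $b X_1 Y_1' \cdots X_m Y_m$ is extended to the strictly longer overlap prefix $b X_1 Y_1' \cdots X_m Y_m' X_{m+1} Y_{m+1}$, with finiteness of $u$ forcing termination at a clean one---so your maximality formulation is the same argument. Your verification of the two non-obvious points (that the extension is proper, and that no factor $X_0 Y_0$ begins before the end of $b$) via maximality of the piece prefix, together with the observation that occurrences at distinct offsets yield pieces even when the relation words involved coincide as words, is sound and is precisely why the proof survives under the weaker, set-based definition of piece used in this paper.
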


\begin{corollary}\label{cor_nomopnorel}
If a word $u$ has no clean overlap prefix, then it contains
no relation word as a factor, and so if $u \equiv v$ then $u = v$.
\end{corollary}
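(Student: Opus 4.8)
The plan is to prove the contrapositive of the second assertion and to derive the first assertion from Proposition~\ref{prop_opgivesmop} together with Proposition~\ref{prop_overlapprefixnorel}. The key observation is that a clean overlap prefix is in particular an overlap prefix, so the hypothesis ``$u$ has no clean overlap prefix'' is, via Proposition~\ref{prop_opgivesmop}, equivalent to the stronger-sounding statement that $u$ has no overlap prefix at all: if $u$ had any overlap prefix, it would be contained in a clean one, contradicting the hypothesis.

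Let me sketch the first claim. Suppose toward a contradiction that $u$ has no clean overlap prefix but nonetheless contains some relation word $R$ as a factor, say $u = s R t$. I would use $R$ to manufacture a relation prefix of $u$, and hence (by the remark in the excerpt that the shortest relation prefix of a word is automatically an overlap prefix) an overlap prefix. Writing $R = X_R Y_R Z_R$ with $X_R$, $Y_R$ the maximal piece prefix and middle word, the prefix $s X_R Y_R$ of $u$ is a relation prefix, since it has the required factorisation $aXY$ with $a = s$. Therefore $u$ has a relation prefix, so it has a shortest one, which by the cited remark is an overlap prefix; by Proposition~\ref{prop_opgivesmop} this overlap prefix is contained in a clean overlap prefix of $u$. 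This contradicts the hypothesis that $u$ has no clean overlap prefix, establishing that $u$ contains no relation word as a factor.

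For the second claim, suppose $u \equiv v$. Since $u$ contains no relation word as a factor, no single rewriting step of the form replacing one side of a relation by the other can be applied to $u$: any such step requires locating a relation word as a factor of the current word. Thus $u$ is irreducible, and the only word equivalent to $u$ that is reachable from $u$ by a derivation sequence is $u$ itself. Here I would invoke the fact (implicit in the setup, and used throughout \cite{K_smallover1}) that $u \equiv v$ means precisely that $u$ and $v$ are joined by a finite sequence of such elementary rewriting steps; since none can be applied to $u$, we conclude $v = u$ in the free monoid, as required.

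The main obstacle is the last step: one must be slightly careful that ``$u$ contains no relation word as a factor'' really does block every rewriting step in both directions, and that this suffices to force $u = v$. The subtlety is that a derivation sequence from $u$ to $v$ might in principle route through longer words, but since the very first step out of $u$ is impossible, the sequence must be trivial. I expect this to be entirely routine given the foundational results already in place, so that the bulk of the argument rests on the clean observation that no-clean-overlap-prefix propagates, via Proposition~\ref{prop_opgivesmop}, to no-relation-prefix and hence to no-relation-word-factor.
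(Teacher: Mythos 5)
Your proof is correct and follows essentially the same route as the paper's (the paper itself omits the argument, citing \cite{K_smallover1}, where the proof is exactly this): a relation word factor $sX_RY_Rz_Rt$ yields the relation prefix $sX_RY_R$, the shortest relation prefix is an overlap prefix, Proposition~\ref{prop_opgivesmop} promotes it to a clean overlap prefix, and a word with no relation word factor admits no elementary rewrite, so $u \equiv v$ forces $u = v$. Your only inessential deviation is citing Proposition~\ref{prop_overlapprefixnorel}, which is not actually needed for this corollary.
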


The following lemma is essentially a restatement of \cite[Lemma~1]{K_smallover1}
using our new notation. The proof is essentially the same as in
\cite{K_smallover1}, with the addition of an obvious inductive argument to
allow for the fact that several rewrites may be needed to obtain $\ol{XYZ}$
from $XYZ$.

\begin{lemma}\label{lemma_staysclean}
Suppose $u = w XYZ u'$ with $w XY$ a clean overlap prefix and
$\ol{XYZ}$ is a complement of $XYZ$. Then $w \ol{XY}$ is a clean
overlap prefix of $w \ol{XYZ} u'$.
\end{lemma}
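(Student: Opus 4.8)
The plan is to reduce to the case of a single rewrite and then iterate. Since $\ol{XYZ}$ is a complement of $XYZ$, there is a chain of relation words $XYZ = R^{(1)}, R^{(2)}, \dots, R^{(m)} = \ol{XYZ}$ such that $(R^{(j)}, R^{(j+1)})$ or $(R^{(j+1)}, R^{(j)})$ is a relation for each $1 \le j < m$; write $R^{(j)} = X^{(j)} Y^{(j)} Z^{(j)}$. I would first prove the lemma in the special case $m = 2$, that is, when $XYZ$ and $\ol{XYZ}$ are linked by a single relation (in either direction, which is harmless since a relation yields an equality in the monoid), and then obtain the general statement by induction on $m$. Assuming inductively that $w X^{(m-1)} Y^{(m-1)}$ is a clean overlap prefix of $w R^{(m-1)} u'$, the special case applied to the single relation linking $R^{(m-1)}$ and $R^{(m)}$ gives that $w X^{(m)} Y^{(m)} = w \ol{XY}$ is a clean overlap prefix of $w R^{(m)} u' = w \ol{XYZ} u'$. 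This induction is the ``obvious'' ingredient replacing the unique single rewrite that was available in the strongly $C(4)$ setting of \cite{K_smallover1}.

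It then remains to treat the single-rewrite case, which proceeds exactly as in \cite[Lemma~1]{K_smallover1}, and I would verify the three defining requirements of a clean overlap prefix for $w \ol{XY}$ in turn. That $w \ol{XY}$ is a relation prefix of $w \ol{XYZ} u'$ is immediate, since $\ol X$ and $\ol Y$ are by definition the maximal piece prefix and middle word of the relation word $\ol{XYZ}$. For the overlap prefix structure, I would write the hypothesised clean overlap prefix as $w XY = b X_1 Y_1' \cdots X_n Y_n$ with $X_n Y_n = XY$, so that $w = b X_1 Y_1' \cdots X_{n-1} Y_{n-1}'$ lies wholly to the left of the rewritten factor; keeping $b$ and the leading factors $X_1 Y_1', \dots, X_{n-1} Y_{n-1}'$ fixed and replacing the final $X_n Y_n$ by $\ol X\, \ol Y$ yields a candidate factorisation $b X_1 Y_1' \cdots X_{n-1} Y_{n-1}' \ol X\, \ol Y$ of $w \ol{XY}$. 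The conditions on the inherited leading factors carry over verbatim because those portions sit inside the unchanged prefix $w$, and the conditions on the final factor hold because $\ol X, \ol Z$ are the maximal piece prefix and suffix of $\ol{XYZ}$; the one genuinely new point is to check that no relation-word prefix structure begins within $b$ and extends into the rewritten portion.

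The crux is the third requirement, cleanliness, and this is precisely where the weakly $C(4)$ hypothesis is used, again following \cite{K_smallover1}. I would argue by contradiction: were $w \ol{XY}$ not clean, then $w \ol{XYZ} u'$ would admit a prefix $w \ol X\, \ol Y' X_0 Y_0$, with $\ol Y'$ a proper, non-empty prefix of $\ol Y$ and $X_0, Y_0$ the maximal piece prefix and middle word of some relation word $X_0 Y_0 Z_0$. Cancelling the shared prefix $w \ol X$, the piece $X_0$ then begins strictly inside the middle word $\ol Y$ of $\ol{XYZ}$, at position $|\ol Y'|$. One then distinguishes whether $X_0 Y_0$ terminates within $\ol{XYZ}$ or runs out past $\ol Z$ into $u'$; using Proposition~\ref{prop_overlapprefixnorel} to restrict which relation words may occur as factors, together with the facts that $\ol X$ and a maximal piece suffix are pieces while the middle word separating them is not, one extracts in each case a relation word written as a product of strictly fewer than four pieces, contradicting weakly $C(4)$.

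I expect this single-rewrite cleanliness analysis to be the main obstacle, as it carries the only real combinatorial content; everything else is bookkeeping. The sole point needing care in the induction is that the hypothesis consumed by the special case at each stage is exactly the conclusion produced at the previous stage, namely a clean overlap prefix of $w R^{(j)} u'$, so that the rewrites compose along the chain. Since a single relation is precisely what the special case handles, and since $w$ and $u'$ remain untouched throughout, the induction closes without further difficulty.
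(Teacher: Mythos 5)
Your proposal is correct and follows essentially the same route as the paper: the paper's proof of Lemma~\ref{lemma_staysclean} is exactly ``the argument of \cite[Lemma~1]{K_smallover1} for a single rewrite, plus an obvious induction along the chain of relation words defining the complement,'' which is precisely your reduction to the $m=2$ case followed by induction on the chain length. Your observation that the conclusion at each stage supplies the hypothesis for the next is the point that makes the paper's ``obvious inductive argument'' go through.
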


From now on, we shall assume that our presentation is weakly $C(4)$.
We are now ready to prove our first main technical result, which is an
analogue of \cite[Lemma 2]{K_smallover1}, and is fundamental to our
approach to weakly $C(4)$ monoids.

\begin{lemma}\label{lemma_overlap}
Suppose a word $u$ has clean overlap prefix $w X Y$. If
$u \equiv v$ then $v$ has overlap prefix $w \ol{X Y}$ for some
complement $\ol{XYZ}$ of $XYZ$, and
no relation word occurring as a factor of $v$ overlaps this prefix,
unless it is $\ol{X Y Z}$ in the obvious place.
\end{lemma}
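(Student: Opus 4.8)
The plan is to induct on the length of a derivation from $u$ to $v$, reducing the statement to the analysis of a single elementary rewrite, with the central structural input being a \emph{rigidity} fact about which relation words can overlap a clean overlap prefix. I would fix a derivation $u = u_0, u_1, \dots, u_m = v$ in which each step $u_i \to u_{i+1}$ replaces a single occurrence of a relation word by a one-step complement, and induct on $m$. The base case $m = 0$ is $u = v$: taking the trivial complement $\ol{XYZ} = XYZ$, the required overlap prefix $w\ol{XY}$ is just $wXY$ itself, so the statement reduces entirely to the rigidity claim below.

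The heart of the argument is the following rigidity claim: if $wXY$ is a clean overlap prefix of a word $t$, then the only relation word that can occur as a factor of $t$ overlapping $wXY$ --- that is, beginning strictly before the end of $wXY$ --- is $XYZ$ itself, occurring at position $|w|$. To prove this I would write the overlap prefix in its canonical form $b X_1 Y_1' \cdots X_n Y_n$ with $X = X_n$ and $Y = Y_n$, and suppose a relation word $R' = X' Y' Z'$ occurs at some position $\ell$ before the end of $wXY$. Since $R'$ begins with its maximal piece prefix, the factor $X'Y'$ begins at $\ell$; the defining property of an overlap prefix (no factor $X_0 Y_0$ begins before the end of $b$) together with cleanness (no relation word begins strictly inside $Y = Y_n$) forces $\ell$ to be the starting position of one of the blocks $X_i$. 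Proposition~\ref{prop_overlapprefixnorel} rules out the possibility that such an $R' = R_i$ with $i < n$ lies inside the prefix; and if it instead extended beyond the prefix, then matching $R_i = X_i Y_i Z_i$ against the interlocking continuation $X_i Y_i' X_{i+1} \cdots$ would exhibit a nonempty suffix of the middle word $Y_i$ as a prefix of the piece $X_{i+1}$, contradicting the weakly $C(4)$ condition that $Y_i$ is not a piece. Hence $i = n$ and $R' = X_n Y_n Z_n = XYZ$ at position $|w|$, as claimed.

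With rigidity in hand I would carry out the inductive step by examining the first rewrite $u_0 \to u_1$, applying a relation $P \to Q$ at some position. If $P$ does not overlap the clean overlap prefix $wXY$, then $u_1$ still begins with $wXY$; a short check (again invoking that a relation-word beginning inside $Y$ would force a piece inside a middle word, against $C(4)$) shows that this prefix remains a \emph{clean} overlap prefix of $u_1$, and the induction hypothesis applied to $u_1 \equiv v$ finishes this case. Otherwise $P$ overlaps the prefix, so by rigidity $u_0 = w XYZ u'$ with $P = XYZ$ at position $|w|$ and $Q = \ol{XYZ}$ a one-step complement; then $u_1 = w \ol{XYZ} u'$, and Lemma~\ref{lemma_staysclean} guarantees that $w\ol{XY}$ is a clean overlap prefix of $u_1$. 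Applying the induction hypothesis to $u_1 \equiv v$ with this prefix produces a complement of $\ol{XYZ}$, which is in turn a complement of $XYZ$ since complements chain, completing the induction.

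I expect the main obstacle to be the rigidity claim, and in particular making the weakly $C(4)$ bookkeeping watertight under the more permissive definition of piece. Because the revised definition allows distinct complements --- relations $(u, v_1)$ and $(u, v_2)$ with $v_1 \neq v_2$ --- one can no longer treat $\ol{R}$ as a single canonically determined word, so the argument must track complements through the $\ol{\,\cdot\,}$ notation rather than assuming uniqueness, and must confirm that the piece-location step never tacitly relied on a relation word coinciding with its unique complement. Once rigidity is secured, the induction and the appeal to Lemma~\ref{lemma_staysclean} are routine and follow the corresponding argument for \cite[Lemma~2]{K_smallover1} essentially verbatim.
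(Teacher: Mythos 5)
Your induction has the same skeleton as the paper's proof (induction on the length of a rewrite sequence, a base-case ``rigidity'' analysis, and an appeal to Lemma~\ref{lemma_staysclean} when the rewritten word is $XYZ$ at position $|w|$), but it collapses exactly the case that carries the real difficulty. When the rewritten relation word is disjoint from the prefix $wXY$, you assert that ``a short check'' shows $wXY$ remains a \emph{clean} overlap prefix of $u_1$. That is false in general: cleanness is a condition about factors $\hat{X}\hat{Y}$ that begin strictly inside $Y$ but may extend \emph{beyond the end of the prefix}, i.e.\ into precisely the part of the word that the rewrite changed. A rewrite performed entirely to the right of $wXY$ can therefore create a prefix $w X Y' \hat{X}\hat{Y}$ of $u_1$ (with $Y'$ a proper non-empty prefix of $Y$) and so destroy cleanness; no piece argument rules this out --- your parenthetical justification only excludes such factors lying inside the prefix, not factors straddling its right end. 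Since your inductive hypothesis applies only to \emph{clean} overlap prefixes, the induction cannot proceed in this case. The paper devotes the longest part of its proof to exactly this situation: it shows that if cleanness fails in $u_1$, then the relation word $R = X_R Y_R Z_R$ produced by the rewrite must begin strictly inside $\hat{Y}$, so that $u_1$ has the longer clean overlap prefix $a X_1 Y_1' \dots X_n Y_n' X Y' \hat{X} \hat{Y}' X_R Y_R$; it applies the inductive hypothesis to \emph{that} prefix, and then recovers the desired conclusion for $wXY$ from Proposition~\ref{prop_overlapprefixnorel}. Without some substitute for this argument your proof is incomplete at its central point.

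There are also flaws in your rigidity claim, though these are repairable. First, the two conditions you cite (the overlap-prefix condition, which only concerns factors $X_0Y_0$ beginning before the end of $b$, and cleanness, which only concerns factors beginning strictly inside $Y_n$) do not force a relation word to begin at the start of some block $X_i$; they say nothing about occurrences beginning strictly inside $X_i$ or inside $Y_i'$. Second, your contradiction for the case $i<n$ is a non sequitur: exhibiting a non-empty \emph{proper suffix} of $Y_i$ as a prefix of the piece $X_{i+1}$ does not contradict the fact that $Y_i$ itself is not a piece, since proper factors of a middle word may perfectly well be pieces. The correct argument (implicit in the paper's base case) is that any relation word beginning at or before position $|w|$ and extending past the end of the prefix contains the final middle word $Y = Y_n$ as a factor; since $Y_n$ is not a piece, that relation word must be the word $XYZ$ with $Y_n$ at offset $|X|$, which pins its starting position to exactly $|w|$.
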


\begin{proof}
Since $w X Y$ is an overlap prefix of $u$, it has by definition a
factorisation
$$w XY = a X_1 Y_1' \dots X_{n} Y_{n}' X Y$$
for some $n \geq 0$. We use this fact to prove the claim by induction on
the length $r$ of a rewrite sequence (using the defining relations) from
$u$ to $v$. 

In the case $r = 0$, we have $u = v$, so $v$ certainly has (clean) overlap
prefix $w XY$.
By Proposition~\ref{prop_overlapprefixnorel}, no relation word factor can
occur entirely within this prefix, unless it is the suffix $X Y$ and $Z = \epsilon$. If
a relation word factor of $v$ overlaps the end of the given overlap prefix
and entirely contains $XY$ then, since $XY$ is not a piece, that
relation word must clearly be $XYZ$. Finally,
a relation word cannot overlap the end of the given overlap prefix but
not contain the suffix $XY$, since this would clearly contradict either the
fact that the given overlap prefix is clean, or the fact that $Y$ is not a
piece.

Suppose now for induction that the lemma holds for all values less than $r$,
and that there is a rewrite sequence from $u$ to $v$ of length $r$. Let
$u_1$ be the second term in the sequence, so that $u_1$ is obtained from
$u$ by a single rewrite using the defining relations, and $v$ from $u_1$
by $r-1$ rewrites.

Consider the relation word in $u$ which is to be rewritten in order to
obtain $u_1$, and in
particular its position in $u$. By Proposition~\ref{prop_overlapprefixnorel},
this relation word cannot be contained in the clean overlap prefix $w XY$,
unless it is $X Y$ where $Z = \epsilon$.

Suppose first that the relation word to be rewritten contains the final
factor $Y$
of the given clean overlap prefix. (Note that this covers in particular the
case that the relation word is $XY$ and $Z = \epsilon$.)
From the weakly $C(4)$ assumption we know that $Y$ is not a piece, so we may deduce
that the relation word is $X Y Z$ contained in the obvious place. In
this case, applying the rewrite clearly leaves $u_1$ with a prefix 
$w \hat{X} \hat{Y}$ for some complement $\hat{X} \hat{Y} \hat{Z}$ of
$XYZ$. By Lemma~\ref{lemma_staysclean}, this is a clean overlap
prefix.  Now $v$ can be obtained from
$u_1$ by $r-1$ rewrite steps, so it follows from the inductive hypothesis
that $v$ has overlap prefix
$w \ol{XY}$ where $\ol{XYZ}$ is a complement of $\hat{X} \hat{Y} \hat{Z}$ and hence of
$XY$. It follows also that no relation word occurring as a factor of $v$
overlaps this prefix, unless it is $\ol{X Y Z}$; this
completes the proof in this case.

Next, we consider the case in which the relation word factor in $u$ to be
rewritten does not contain the final factor $Y$ of the clean overlap
prefix, but does overlap with the end of the clean overlap prefix. Then
$u$ has a factor of the form $\hat{X} \hat{Y}$, where $\hat{X}$ is the
maximal piece prefix and $\hat{Y}$ the middle word of a relation word,
which overlaps $X Y$, beginning after the start of $Y$. This clearly
contradicts the assumption that the overlap prefix is clean.

Finally, we consider the case in which the relation word factor in $u$
which is to be rewritten does not overlap the given clean overlap prefix
at all. Then obviously, the given clean overlap prefix of $u$ remains an
overlap prefix of $u_1$. If this overlap prefix is clean, then a simple
application of the inductive hypothesis again suffices to prove that $v$
has the required property.

There remains, then, only the case in which the given overlap prefix is
no longer clean in $u_1$. Then by definition there exist words $\hat{X}$ and
$\hat{Y}$, being a maximal piece prefix and middle word respectively of some relation
word, such
that $u_1$ has the prefix
$$a X_1 Y_1' \dots X_n Y_n' X Y' \hat{X} \hat{Y}$$
for some proper, non-empty prefix $Y'$ of $Y$.
Now certainly this is not a prefix of $u$, since this would contradict
the assumption that $a X_1 Y_1' \dots X_n Y_n' XY$
 is a clean overlap
prefix of $u$. So we deduce that $u_1$ can be transformed to $u$ by
rewriting a relation word
overlapping the final $\hat{X}\hat{Y}$. This relation word factor cannot contain
the entire of this factor $\hat{X}\hat{Y}$, since then it would overlap with
the prefix $a X_1 Y_1' \dots X_n Y_n X Y$, which would 
again contradict the assumption that this prefix is a clean overlap prefix of
$u$. Nor can the relation word contain the final factor $\hat{Y}$, since $\hat{Y}$ is not a piece.
Hence, $u_1$ must have a prefix
$$a X_1 Y_1' \dots X_{n-1} Y_{n-1}' X_n Y_n' X Y' \hat{X} \hat{Y}' R$$
for some relation word and proper, non-empty prefix $\hat{Y}'$ of $\hat{Y}$ and
some relation word $R$. Suppose $R = X_R Y_R Z_R$ where $X_R$ and $Z_R$
are the maximal piece prefix and suffix respectively. Then it is readily
verified that
$$a X_1 Y_1' \dots X_{n-1} Y_{n-1}' X_n Y_n' X Y' \hat{X} \hat{Y}' X_R Y_R$$
is a clean overlap prefix of $u_1$. Indeed, the fact it is an overlap prefix
is immediate, and if it were not clean then some factor of $u_1$ of the form
$\tilde{X} \tilde{Y}$ would have to overlap the end of the given prefix; but
this factor would either be contained in $Y_R Z_R$ (contradicting the
fact that $\tilde{X}$ is a maximum piece prefix of $\tilde{X} \tilde{Y} \tilde{Z}$)
or would contain a non-empty suffix of $Y_R$ followed by $Z_R$ (contradicting
the fact that $Z_R$ is a maximum piece prefix of $X_R Y_R Z_R$).

Now by the inductive
hypothesis, $v$ has prefix
\begin{equation}\label{vprefix2}
a X_1 Y_1' \dots X_{n-1} Y_{n-1}' X_n Y_n' X Y' \hat{X} \hat{Y}' \ol{X_R Y_R}.
\end{equation}
for some complement $\ol{X_R Y_R}$ of $X_R Y_R$. But now
$v$ has prefix
$$a X_1 Y_1' \dots X_{n-1} Y_{n-1}' X_n Y_n' X Y' \hat{X} \hat{Y}'$$
which in turn has prefix
\begin{equation}\label{vprefix3}
a X_1 Y_1' \dots X_{n-1} Y_{n-1}' X_n Y_n' X Y.
\end{equation}
Moreover, by Proposition~\ref{prop_overlapprefixnorel}, the prefix
\eqref{vprefix2} of $v$ contains no relation
word as a factor, unless it is the final factor $\ol{X_R Y_R}$
and $\ol{Z_R} = \epsilon$, and it follows
easily that no relation word factor overlaps the prefix \eqref{vprefix3}
of $v$.
\end{proof}

The following results are now proved exactly as their analogues in
\cite{K_smallover1}.

\begin{corollary}\label{cor_noncleanprefix}
Suppose a word $u$ has (not necessarily clean) overlap prefix
$w XY$. If $u \equiv v$ then $v$ has a
prefix $w$ and contains no relation word overlapping this prefix.
\end{corollary}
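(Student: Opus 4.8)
The plan is to reduce the general, possibly non-clean, case to Lemma~\ref{lemma_overlap} by passing to a clean overlap prefix containing the given one. First I would invoke Proposition~\ref{prop_opgivesmop} to embed the overlap prefix $wXY$ of $u$ in a clean overlap prefix $C$ of $u$. Writing $C$ in its unique factorisation with final block $\hat{X}\hat{Y}$, say $C = w''\hat{X}\hat{Y}$, the uniqueness of the overlap-prefix factorisation forces the initial blocks of $C$ to agree with those of $wXY$; in particular $w$ is a prefix of $w''$, and hence of $C$.

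Next I would apply Lemma~\ref{lemma_overlap} to the clean overlap prefix $w''\hat{X}\hat{Y}$ of $u$. Since $u \equiv v$, the lemma yields that $v$ has an overlap prefix $w''\ol{\hat{X}\hat{Y}}$ for some complement $\ol{\hat{X}\hat{Y}\hat{Z}}$ of $\hat{X}\hat{Y}\hat{Z}$, and moreover that no relation word occurring as a factor of $v$ overlaps this prefix, with the sole exception of $\ol{\hat{X}\hat{Y}\hat{Z}}$ sitting in the obvious place at the end of the prefix, that is, beginning at position $|w''|$.

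The first assertion of the corollary is then immediate: since $w$ is a prefix of $w''$, which is in turn a prefix of the overlap prefix $w''\ol{\hat{X}\hat{Y}}$ of $v$, the word $v$ has prefix $w$. For the second assertion, suppose some relation word occurred as a factor of $v$ overlapping the prefix $w$. As $w$ lies inside $w''\ol{\hat{X}\hat{Y}}$, this relation word would also overlap $w''\ol{\hat{X}\hat{Y}}$, and so by the lemma it could only be the exceptional factor $\ol{\hat{X}\hat{Y}\hat{Z}}$ beginning at position $|w''|$. But $w$ is a prefix of $w''$, so this factor begins at or after the end of $w$ and therefore cannot overlap $w$. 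Hence no relation word factor of $v$ overlaps $w$, as required.

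The one point requiring care --- and the only place where the structure of the earlier results is genuinely used --- is the verification that $w$ is a prefix of $w''$ rather than merely of the full clean prefix $C$; this is exactly what guarantees that the single permitted relation word $\ol{\hat{X}\hat{Y}\hat{Z}}$ begins no earlier than the end of $w$ and so does not encroach upon it. Everything else is routine bookkeeping with prefixes.
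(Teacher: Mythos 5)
Your proof is correct and takes essentially the same route as the paper's (which defers to the argument in \cite{K_smallover1}): embed the given overlap prefix in a clean overlap prefix via Proposition~\ref{prop_opgivesmop}, apply Lemma~\ref{lemma_overlap} to that clean prefix $w''\hat{X}\hat{Y}$, and observe that the single permitted relation word begins at position $|w''| \geq |w|$ and so cannot overlap $w$. The one imprecision is your appeal to ``uniqueness of the overlap-prefix factorisation'' to conclude that $w$ is a prefix of $w''$: that uniqueness concerns the factorisation of a single fixed word, whereas the needed block alignment between $wXY$ and the larger clean prefix really comes from the constructive proof of Proposition~\ref{prop_opgivesmop}, which builds the clean overlap prefix by repeatedly truncating the final middle word and appending new blocks $X_iY_i$ --- exactly the fact the paper's argument relies on as well, so the claim itself stands.
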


\begin{proposition}\label{prop_dumpprefix}
Suppose a word $u$ has an overlap prefix $a X Y$ and that
$u = a X Y u''$. Then $u \equiv v$ if and only if $v = a v'$ where
$v' \equiv X Y u''$.
\end{proposition}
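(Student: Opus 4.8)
The reverse implication should be immediate: since $\equiv$ is a congruence compatible with concatenation, $v' \equiv XYu''$ forces $v = av' \equiv aXYu'' = u$. All the substance lies in the forward implication, and the plan is to reduce it to a statement about an arbitrary rewrite sequence from $u$ to $v$.

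First I would invoke Corollary~\ref{cor_noncleanprefix} with $w = a$: from $u \equiv v$ it yields at once that $v$ has prefix $a$, say $v = av'$, and that no relation word factor of $v$ shares a position with this prefix. The key observation is that this conclusion is available not only for $v$ but for \emph{every} word equivalent to $u$. So I would fix a rewrite sequence $u = v_0, v_1, \dots, v_r = v$ using the defining relations and apply the corollary to each term in turn: since $v_i \equiv u$, we obtain $v_i = a v_i'$ with no relation word factor of $v_i$ overlapping the prefix $a$. Here $v_0' = XYu''$ and $v_r' = v'$.

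The remaining step is to show that each single rewrite $v_i \to v_{i+1}$ restricts to a single rewrite $v_i' \to v_{i+1}'$. The factor replaced is one side of a defining relation, hence a relation word factor of $v_i$; by the previous paragraph it does not meet the prefix $a$ at all, and in particular does not lie inside it. It therefore sits entirely within the suffix $v_i'$, so applying the rewrite leaves the prefix $a$ untouched and transforms $v_i'$ into $v_{i+1}'$. Chaining the restricted rewrites gives a derivation $XYu'' = v_0' \to \dots \to v_r' = v'$, whence $v' \equiv XYu''$.

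The point requiring care is precisely the guarantee that no rewrite in the sequence ever disturbs the prefix $a$. This does not follow from applying the corollary to the endpoints alone; the argument turns entirely on the fact that every intermediate word is itself equivalent to $u$, and so inherits via Corollary~\ref{cor_noncleanprefix} the property that its relation word factors avoid $a$. I expect this uniform application across the whole rewrite sequence to be the only genuinely non-routine step.
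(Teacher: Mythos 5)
Your proof is correct and follows essentially the same route as the paper's (which is given by reference to \cite{K_smallover1}): the reverse direction is immediate from compatibility of $\equiv$ with concatenation, and the forward direction applies Corollary~\ref{cor_noncleanprefix} to \emph{every} term of a rewrite sequence from $u$ to $v$, so that no rewrite ever touches the prefix $a$ and the whole sequence restricts to a derivation from $XYu''$ to $v'$.
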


\begin{proposition}\label{prop_inactive}
Let $u$ be a word and $p$ a piece.
If $u$ is $p$-inactive then $p u \equiv v$ if and only if $v = p w$
for some $w$ with $u \equiv w$.
\end{proposition}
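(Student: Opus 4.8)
The plan is to prove the two implications separately. The reverse implication is immediate: if $v = pw$ with $u \equiv w$, then since $\equiv$ is a congruence on the free monoid, left-multiplying by $p$ gives $pu \equiv pw = v$, with nothing further to check.

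For the forward implication, suppose $pu \equiv v$, and split into two cases according to whether $pu$ has a relation prefix. If $pu$ has no relation prefix, then a fortiori it has no clean overlap prefix, so Corollary~\ref{cor_nomopnorel} applies to $pu$: it contains no relation word as a factor, and hence $pu \equiv v$ forces $v = pu$. Taking $w = u$ disposes of this case.

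In the remaining case $pu$ has a relation prefix, so its shortest relation prefix is an overlap prefix $aXY$. This is where the hypothesis enters: $p$-inactivity of $u$ means exactly that every relation prefix $aXY$ of $pu$ satisfies $|a| \geq |p|$, so in particular $p$ is a prefix of $a$. Writing $a = p a_0$ and $pu = a X Y u'' = p a_0 X Y u''$, left-cancellation in the free monoid gives $u = a_0 X Y u''$. Now I would invoke Proposition~\ref{prop_dumpprefix} with $pu$ in the role of the word having overlap prefix $aXY$: from $pu \equiv v$ it yields $v = a v' = p a_0 v'$ for some $v'$ with $v' \equiv X Y u''$. Setting $w = a_0 v'$, we obtain $v = p w$, and since $\equiv$ is a congruence, $w = a_0 v' \equiv a_0 X Y u'' = u$, exactly as required.

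The one delicate point---and the place where $p$-inactivity is indispensable---is this last case. Proposition~\ref{prop_dumpprefix} removes the entire prefix $a$ lying before the overlapping relation activity, which is in general longer than the $p$ we actually wish to strip. The purpose of the $p$-inactivity hypothesis is precisely to guarantee $|a| \geq |p|$, so that $p$ lies wholly within $a$; we can then dump all of $a$ and fold the surplus $a_0$ back into $w$. I expect the only real work to be the routine free-monoid bookkeeping (confirming that $p$ is a prefix of $a$ and that the residual factors satisfy $a_0 v' \equiv a_0 X Y u'' = u$), all of which follows from $|a| \geq |p|$ and the congruence property of $\equiv$.
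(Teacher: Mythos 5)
Your proof is correct and follows essentially the same route as the paper's (which defers to the proof in \cite{K_smallover1}): the easy direction by the congruence property, and the hard direction by splitting on whether $pu$ has a relation prefix, using Corollary~\ref{cor_nomopnorel} in the first case and, in the second, using $p$-inactivity to ensure the shortest relation prefix (an overlap prefix) $aXY$ satisfies $|a|\geq|p|$ so that Proposition~\ref{prop_dumpprefix} strips off $a = pa_0$ and the surplus $a_0$ folds back into $w$. No gaps.
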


\begin{proposition}\label{prop_coactive}
Let $p_1$ and $p_2$ be pieces and suppose $u$ is $p_1$-active and $p_2$-active.
Then $p_1$ and $p_2$ have a
common non-empty suffix, and if $z$ is their maximal common suffix then
\begin{itemize}
\item[(i)] $u$ is $z$-active;
\item[(ii)] $p_1 u \equiv v$ if and only if $v = z_1 v'$ where $z_1 z = p_1$ and
            $v' \equiv z u$; and
\item[(iii)] $p_2 u \equiv v$ if and only if $v = z_2 v'$ where $z_2 z = p_2$
            and $v' \equiv z u$.
\end{itemize}
\end{proposition}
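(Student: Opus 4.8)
The plan is to reduce all three parts to the dumping principle of Proposition~\ref{prop_dumpprefix}; the real work is to locate the maximal common suffix $z$ of $p_1$ and $p_2$ and to show that prepending either piece triggers a relation word straddling the $p_i$--$u$ boundary at the start of a suffix of $z$.

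First I would unpack the hypotheses. Since $u$ is $p_i$-active ($i=1,2$), the word $p_iu$ has a relation prefix; taking the shortest one, which is an overlap prefix as noted after the definition of overlap prefix, I obtain a relation word $X_iY_iZ_i$ together with an overlap prefix $b_iX_iY_i$ of $p_iu$. Two facts about $X_iY_i$ drive everything: it is not a piece (by weak $C(3)$, since otherwise $X_iY_iZ_i$ would be a product of two pieces), and a factor of a piece is again a piece. Consequently $X_iY_i$ cannot occur as a factor of the piece $p_i$, which forces both $|b_i|<|p_i|$ (comparing with the relation prefix guaranteed by $p_i$-activeness, since a factor of the span of that prefix lying inside $p_i$ would be a piece) and, writing $q_i$ for the suffix of $p_i$ of length $|p_i|-|b_i|$, a factorisation $X_iY_i=q_it_i$ with $t_i$ a non-empty prefix of $u$. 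Thus each relation word $X_iY_iZ_i$ genuinely straddles the boundary between $p_i$ and $u$.

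The heart of the argument, and the step I expect to be the main obstacle, is to align these two straddling occurrences. Assuming without loss of generality that $|t_1|\le|t_2|$, the word $t_1$ is a common prefix of $u$, and so occurs both as a suffix of $X_1Y_1$ and as a factor of $X_2Y_2$ beginning at position $|q_2|$; hence $t_1$ is a factor of both relation words. I would then run a case analysis according to whether $X_1Y_1Z_1$ and $X_2Y_2Z_2$ coincide or are distinct, repeatedly using the weakly $C(4)$ hypothesis in the form that neither $Y_i$ nor $X_iY_i$ is a piece (so neither can be a factor of $t_i$, since a factor of a piece is a piece). The delicate point is to rule out the two occurrences being misaligned, i.e.\ to force $X_1Y_1Z_1=X_2Y_2Z_2$ and $q_1=q_2$; this is precisely where the small overlap conditions do the work, and is the analogue of the corresponding alignment argument in \cite{K_smallover1}. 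The outcome is that $q:=q_1=q_2$ is simultaneously a suffix of $p_1$ and of $p_2$, so $p_1$ and $p_2$ have a non-empty common suffix.

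With a common suffix in hand, let $z$ be the maximal common suffix of $p_1$ and $p_2$ and write $p_1=z_1z$, $p_2=z_2z$. Since every common suffix of $p_1$ and $p_2$ is a suffix of the longest one, $q$ is a suffix of $z$, say $z=cq$; this is exactly where maximality of $z$ is used, as it is what makes the regrouping below produce $zu$. For (i), the relation word $X_1Y_1Z_1$ occurs in $zu=c\,q\,u$ beginning at position $|c|<|z|$, so $zu$ has a relation prefix with left part $c$ and $u$ is $z$-active. For (ii), the shortest relation prefix of $p_1u$ is an overlap prefix with left part $z_1c$, the result of deleting the suffix $q$ from $p_1$; Proposition~\ref{prop_dumpprefix} then gives $p_1u\equiv v$ if and only if $v=z_1c\,w$ with $w\equiv qu$, and regrouping as $v=z_1(cw)$ with $cw\equiv cqu=zu$ yields the forward implication in the required form $v=z_1v'$, $v'\equiv zu$. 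The converse is immediate, since $v=z_1v'$ with $v'\equiv zu$ gives $v\equiv z_1zu=p_1u$ by congruence. Part (iii) is proved identically with $p_2$ in place of $p_1$.
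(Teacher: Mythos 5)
Your reduction to Proposition~\ref{prop_dumpprefix} is exactly the route the paper intends (its ``proof'' of this proposition is a deferral to the analogous result in \cite{K_smallover1}, whose argument has precisely your shape), and the final part of your proposal --- locating $q$ inside the maximal common suffix $z=cq$, reading off (i), and regrouping the output of Proposition~\ref{prop_dumpprefix} as $v=z_1(cw)$ with $cw\equiv cqu=zu$ to get (ii) and (iii) --- is complete and correct. The genuine gap is at the step you yourself flag as the main obstacle: the alignment forcing $X_1Y_1Z_1=X_2Y_2Z_2$ and $q_1=q_2$ is never proved, only asserted to be ``where the small overlap conditions do the work''. Moreover, the mechanism you name cannot close it as stated: the claim that $Y_i$ and $X_iY_i$ ``cannot be a factor of $t_i$, since a factor of a piece is a piece'' presupposes that $t_i$ is a piece, which is not known at that point --- showing that $t_1$ \emph{would be} a piece in the misaligned configurations is exactly the content of the missing step.

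Here is how to close it. Keep your notation $X_iY_i=q_it_i$ with $q_i$ a non-empty suffix of $p_i$ and $t_i$ a non-empty prefix of $u$, and assume $|t_1|\le|t_2|$, so $t_1$ is a prefix of $t_2$. If $X_1Y_1Z_1$ and $X_2Y_2Z_2$ are distinct relation words, then $t_1$ is a factor of both, hence a piece. If they are the same relation word but $|t_1|<|t_2|$, then $t_1$ occurs in $X_1Y_1$ at offset $|q_1|$ (as its suffix) and at offset $|q_2|\neq|q_1|$ (as a prefix of $t_2$), so again $t_1$ is a piece. In either case $q_1$ is a piece (a factor of the piece $p_1$) and $Z_1$ is a piece, so $X_1Y_1Z_1=q_1t_1Z_1$ is a product of three pieces, contradicting weak $C(4)$. (Alternatively, once $t_1$ is known to be a piece your hint becomes usable: $Y_1$ is not a piece, so $t_1$ must be a proper suffix of $Y_1$, whence $q_1=X_1y$ for a non-empty prefix $y$ of $Y_1$; since $q_1$ is a piece, this contradicts the maximality of the piece prefix $X_1$.) Hence both bad configurations are impossible, and $q:=q_1=q_2$ is a common non-empty suffix, as you wanted. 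Note that the same two-case containment argument is what rigorously justifies your terse parenthetical that the shortest relation prefix $b_iX_iY_i$ of $p_iu$ has $|b_i|<|p_i|$: otherwise the occurrence of $X_iY_i$ would be properly contained, at positive offset, in the occurrence of the relation word witnessing $p_i$-activeness, making $X_iY_i$ a piece if the two relation words are distinct, and forcing the impossible inequality $|X_iY_i|<|X_iY_i|$ if they coincide.
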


\begin{corollary}\label{cor_actsame}
Let $p_1$ and $p_2$ be pieces. Suppose $p_1 u \equiv p_1 v$ and $u$
is $p_2$-active. Then $p_2 u \equiv p_2 v$.
\end{corollary}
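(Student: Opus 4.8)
The plan is to split the argument according to whether $u$ is $p_1$-active, since the two tools naturally available---Proposition~\ref{prop_inactive} for the inactive case and Proposition~\ref{prop_coactive} for the active case---are organised along precisely this dichotomy. In both cases the strategy is to reduce the assertion to an elementary cancellation in the free monoid, combined with the fact that $\equiv$ respects left multiplication.

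First I would dispose of the case where $u$ is $p_1$-inactive. Applying Proposition~\ref{prop_inactive} to the hypothesis $p_1 u \equiv p_1 v$, reading the ``$v$'' of that statement as $p_1 v$, yields $p_1 v = p_1 w$ in the free monoid for some $w$ with $u \equiv w$. Cancelling the common prefix $p_1$ forces $w = v$, so in fact $u \equiv v$, and the desired conclusion $p_2 u \equiv p_2 v$ follows at once. Notice that this half makes no use of the $p_2$-activity hypothesis.

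The substantive case is when $u$ is $p_1$-active; since $u$ is $p_2$-active by assumption, Proposition~\ref{prop_coactive} then applies to the pair $p_1, p_2$, producing a nonempty maximal common suffix $z$ together with factorisations $p_1 = z_1 z$ and $p_2 = z_2 z$. The guiding idea is that both the hypothesis $p_1 u \equiv p_1 v$ and the desired conclusion $p_2 u \equiv p_2 v$ are governed by the single statement $zu \equiv zv$. Concretely, I would observe that $p_1 v = z_1 (zv)$ has exactly the shape required by part~(ii) of the proposition, so that part forces $zv \equiv zu$; feeding this back into part~(iii), with $v'$ taken to be $zv$, recognises $p_2 v = z_2 (zv)$ as having the form demanded there and delivers $p_2 u \equiv p_2 v$.

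I expect the only real obstacle to be the bookkeeping: one must verify that the words $p_1 v$ and $p_2 v$ genuinely match the factorisation patterns $z_1 v'$ and $z_2 v'$ appearing in parts (ii) and (iii), which rests on $z$ being a common suffix of both $p_1$ and $p_2$---exactly the content of the maximal-common-suffix conclusion of Proposition~\ref{prop_coactive}. Since every manipulation beyond these citations is a routine cancellation in the free monoid, no additional small-overlap machinery should be required.
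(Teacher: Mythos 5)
Your proof is correct and follows essentially the same route as the paper's (which is inherited from \cite{K_smallover1}): split on whether $u$ is $p_1$-inactive or $p_1$-active, handle the first case by Proposition~\ref{prop_inactive} plus cancellation of $p_1$ in the free monoid, and handle the second by combining parts (ii) and (iii) of Proposition~\ref{prop_coactive} through the common word $zu \equiv zv$. No gaps; the factorisation bookkeeping you flag is exactly the routine free-monoid cancellation it appears to be.
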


The following is a strengthening of the \cite[Corollary 4]{K_smallover1}

\begin{corollary}\label{cor_eitheror}
Let $u$ and $v$ be words and $p_1, p_2, \dots, p_k$ be pieces.
Suppose there exist words $u = u_1, \dots, u_n = v$ such that
for $1 \leq i < n$ there exists $1 \leq j_i \leq k$ with
$p_{j_i} u_i \equiv p_{j_i} u_{i+1}$.
Then $p_j u \equiv p_j v$ for some $j$ with $1 \leq j \leq k$.
\end{corollary}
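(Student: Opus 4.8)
The plan is to induct on the length $n$ of the connecting chain. The base case $n=1$ forces $u=v$, so any one of the pieces works (the statement presupposes $k\geq 1$). For $n\geq 2$, I would first apply the inductive hypothesis to the shorter chain $u_1,\dots,u_{n-1}$, which supplies an index $a$ with $p_a u_1 \equiv p_a u_{n-1}$, and then attempt to absorb the final link $p_{j_{n-1}} u_{n-1} \equiv p_{j_{n-1}} u_n$ into this same piece $p_a$, so that $j=a$ serves for the whole chain.

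The engine is Corollary~\ref{cor_actsame}, which transports an equivalence $p\,w\equiv p\,w'$ from one piece to another, provided the left-hand word is active with respect to the target piece. I would therefore split on whether $u_{n-1}$ is $p_a$-active. If it is, then feeding $p_{j_{n-1}} u_{n-1}\equiv p_{j_{n-1}} u_n$ together with the $p_a$-activity of $u_{n-1}$ into Corollary~\ref{cor_actsame} (with $p_1=p_{j_{n-1}}$ and $p_2=p_a$) yields $p_a u_{n-1}\equiv p_a u_n$; transitivity with $p_a u_1\equiv p_a u_{n-1}$ then gives $p_a u_1\equiv p_a u_n$, and $j=a$ works. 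If instead $u_{n-1}$ is $p_a$-inactive, I would invoke Proposition~\ref{prop_inactive}: reading the known $p_a u_{n-1}\equiv p_a u_1$ through inactivity (the right-hand side already has the form $p_a w$ with $w=u_1$) collapses it to the genuine equivalence $u_1\equiv u_{n-1}$. Since $\equiv$ is a congruence, prepending $p_{j_{n-1}}$ and chaining with the final link gives $p_{j_{n-1}} u_1\equiv p_{j_{n-1}} u_{n-1}\equiv p_{j_{n-1}} u_n$, so this time $j=j_{n-1}$ works.

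These two cases are exhaustive, so the induction closes. I expect the only delicate point to be the correct use of inactivity in the second case: one must check that Proposition~\ref{prop_inactive} genuinely upgrades the weak statement $p_a u_1\equiv p_a u_{n-1}$ to the strong statement $u_1\equiv u_{n-1}$, rather than to some prefix-stripped variant, which it does because the factorisation of $p_a u_1$ as $p_a w$ is forced in the free monoid. It is worth noting that the pieces are allowed to vary arbitrarily along the chain, but because the induction reduces everything to a single transfer at the final link, no simultaneous coordination of the pieces $p_1,\dots,p_k$ is ever required; this is precisely what lets the multi-piece strengthening go through with no more work than the single-piece case.
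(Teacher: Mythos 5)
Your proposal is correct and is essentially the paper's own argument: both proofs shorten the chain one link at a time using exactly the same two tools, namely Corollary~\ref{cor_actsame} when the relevant middle word is active and Proposition~\ref{prop_inactive} together with the congruence property of $\equiv$ when it is inactive. The only difference is cosmetic---the paper frames it as a minimal-counterexample argument that collapses the first two links (showing the minimal $n$ is at most $2$), whereas you run an induction that absorbs the last link into the piece supplied by the inductive hypothesis---so the combinatorial core is identical.
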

\begin{proof}
Fix $u$, $v$ and $p_1, \dots, p_k$, and suppose $n$ is minimal such 
that a sequence $u_1, \dots, u_n$ with the hypothesized properties exists.
Our aim is thus to show that $n \leq 2$. Suppose for a contradiction
that $n > 2$.

If $u_2$ was $p_{j_2}$-inactive then by Proposition~\ref{prop_inactive} we
would have $u_2 \equiv u_3$ so that $p_{j_1} u_1 \equiv p_{j_1} u_2 \equiv p_{j_1} u_3$
which clearly contradicts the minimality assumption on $n$.
Thus, $u_2$ is $p_{j_2}$-active.
But now since $p_{j_1} u_1 \equiv p_{j_1} u_2$, we apply
Corollary~\ref{cor_actsame} to see that
$p_{j_2} u_1 \equiv p_{j_2} u_2 \equiv p_{j_2} u_3$, which again
contradicts the minimality of $n$.
\end{proof}

We now present a lemma which gives a set of mutually exclusive combinatorial
conditions, the disjunction of which is necessary and sufficient for two words
of a certain form to represent the same element.

\begin{lemma}\label{lemma_eq}
Suppose $u = X Y u'$ where $XY$ is a clean overlap prefix of
$u$. Then $u \equiv v$ if and only if one of the following mutually
exclusive conditions holds:
\begin{itemize}
\item[(1)] $u = XYZ u''$ and $v = XYZ v''$ and $\ol{Z} u'' \equiv \ol{Z} v''$
for some complement $\ol{Z}$ of $Z$;
\item[(2)] $u = X Y u'$, $v = X Y v'$, and $Z$ fails to be a
prefix of at least one of $u'$ and $v'$, and $u' \equiv v'$;
\item[(3)] $u = X Y Z u''$, $v = \ol{X} \ol{Y} \ol{Z} v''$ for some
uniquely determined proper complement $\ol{XYZ}$ of $XYZ$, 
and $\hat{Z} u'' \equiv \hat{Z} v''$ for some complement $\hat{Z}$
of $Z$;
\item[(4)] $u = X Y u'$, $v = \ol{X} \ol{Y} \ol{Z} v''$ for some uniquely
determined proper complement $\ol{XYZ}$ of $XYZ$ but
$Z$ is not a prefix of $u'$ and $u' \equiv Z v''$;
\item[(5)] $u = X Y Z u''$, $v = \ol{X} \ol{Y} v'$ for some uniquely determined
proper complement
$\ol{XYZ}$ of $XYZ$,
but $\ol{Z}$ is not a prefix of $v'$ and $\ol{Z} u'' \equiv v'$;
\item[(6)] $u = X Y u'$, $v = \ol{X} \ol{Y} v'$ for some uniquely determined proper complement
$\ol{XYZ}$ of $XYZ$, $Z$ is not
a prefix of $u'$ and $\ol{Z}$ is not a prefix of $v'$, but
$Z = z_1 z$, $\ol{Z} = z_2 z$, $u' = z_1 u''$, $v' = z_2 v''$ where
$u'' \equiv v''$ and $z$ is the maximal common suffix of $Z$ and $\ol{Z}$,
$z$ is non-empty, and $z$ is a possible prefix of $u''$.
\end{itemize}
\end{lemma}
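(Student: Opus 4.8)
The plan is to prove the two directions separately, using Lemma~\ref{lemma_overlap} to control the front of $v$ and the activity machinery of Propositions~\ref{prop_inactive} and~\ref{prop_coactive} to reduce the remaining equivalence to a shorter one. For the forward direction I would begin by invoking Lemma~\ref{lemma_overlap}: since $XY$ is a clean overlap prefix of $u$ and $u \equiv v$, the word $v$ has overlap prefix $\ol{XY}$ for some complement $\ol{XYZ}$ of $XYZ$, and no relation word factor of $v$ overlaps this prefix except $\ol{XYZ}$ in the obvious place. Because overlap prefix factorisations are unique, this complement is determined by $v$; this both supplies the phrase ``uniquely determined proper complement'' in conditions (3)--(6) and drives the top-level case split. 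Either the complement is trivial ($\ol{XYZ} = XYZ$), so that $v = XY v'$ and we are heading for condition (1) or (2); or it is proper ($\ol{X} \ol{Y} \neq XY$), so that $v = \ol{X} \ol{Y} v'$ and we are heading for one of (3)--(6).

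Within each branch I would subdivide according to whether $Z$ is a prefix of $u'$ (equivalently whether $u = XYZ u''$) and whether $\ol{Z}$ is a prefix of $v'$. In the trivial branch these give condition (1) (both prefixes present) and condition (2) ($Z$ fails to be a prefix of at least one of $u'$, $v'$); in the proper branch the four combinations of these two prefix conditions give conditions (3)--(6). Mutual exclusivity is then immediate: the trivial and proper branches are separated by the value of the (unique) complement, and within each branch the subconditions form a genuine partition of the possible prefix behaviours of $u'$ and $v'$.

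The analytic core is the reduction of $u \equiv v$ to the shorter equivalences recorded in each condition. The fronts $XY$ and $\ol{X} \ol{Y}$ are rigid: by Proposition~\ref{prop_overlapprefixnorel} no relation word lies inside them, and by Lemma~\ref{lemma_overlap} the only relation word overlapping the front of $u$ (respectively $v$) is $XYZ$ (respectively $\ol{XYZ}$), which can be applied only when $Z$ (respectively $\ol{Z}$) is present. Stripping these fronts therefore reduces the problem to comparing the tails, with the maximal piece suffixes $Z$ and $\ol{Z}$ as the only mediators across the cut, multi-step rewrites being handled by Corollaries~\ref{cor_actsame} and~\ref{cor_eitheror}. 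When a boundary piece can be peeled off cleanly via Proposition~\ref{prop_inactive} we obtain the simple reduced equivalences of conditions (1)--(5) (for instance $u' \equiv Z v''$ in (4) and $\ol{Z} u'' \equiv v'$ in (5)). The remaining case, in which both tails are active with respect to their boundary pieces, is handled by Proposition~\ref{prop_coactive} with $p_1 = Z$ and $p_2 = \ol{Z}$: this extracts their maximal common suffix $z$ and the factorisations $Z = z_1 z$, $\ol{Z} = z_2 z$, $u' = z_1 u''$, $v' = z_2 v''$ with $u'' \equiv v''$ and $z$ a possible prefix of $u''$ --- precisely condition (6). The converse direction is the easier one: in each case the displayed reduced equivalence, together with a single application of the relation $XYZ = \ol{XYZ}$ where needed, exhibits a rewriting witnessing $u \equiv v$, and one checks the reductions are reversible.

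The main obstacle I anticipate is the boundary analysis in the forward direction, namely tracking through a general multi-step rewriting sequence exactly how $Z$ and $\ol{Z}$ interact at the cut. Condition (6) is the delicate one: showing that activity of both tails forces the common-suffix structure, and in particular that $z$ is a possible prefix of the shared tail $u''$, is where Proposition~\ref{prop_coactive} must be deployed with care. A secondary point requiring attention is the role of the auxiliary complements of $Z$ appearing in conditions (1) and (3), namely the words $\ol{Z}$ and $\hat{Z}$: one must verify that the reduced equivalence holds for \emph{some} complement of $Z$ rather than for $Z$ itself, and confirm that this weaker statement still suffices to drive the converse.
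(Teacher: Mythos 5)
Your proposal is correct and takes essentially the same approach as the paper's own proof: Lemma~\ref{lemma_overlap} controls the front of $v$ and yields mutual exclusivity, the case split is by trivial versus proper complement and by the prefix behaviour of $Z$ and $\ol{Z}$, and the reductions are carried out by cutting rewrite sequences at front rewrites and applying Corollaries~\ref{cor_actsame} and~\ref{cor_eitheror} together with Propositions~\ref{prop_inactive} and~\ref{prop_coactive} (the latter with $p_1 = Z$, $p_2 = \ol{Z}$ in condition (6)), exactly as the paper does. The two subtleties you flag are precisely the ones the paper's proof turns on: the ``some complement of $Z$'' conclusions in (1) and (3) are exactly what the strengthened Corollary~\ref{cor_eitheror} exists to handle, and the possible-prefix condition on $z$ is what makes the converse direction of (6) go through.
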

\begin{proof}
It follows easily from the definitions that no complement of $XY$ is a
prefix of another. Hence, $v$ can have at most one of them as a prefix. Thus,
conditions (1)-(2) are not consistent with conditions (3)-(6), and the
prefixes of $v$ in (3)-(6) are uniquely determined. The mutual
exclusivity of (1) and (2) is self-evident from the definitions, and
likewise that of (3)-(6). 

It is easily verified that each of the conditions
(1)-(5) imply that $u \equiv v$. We show next that (6) implies that
$u \equiv v$. Since $z$ is a possible prefix of $u''$ and $u'' \equiv v''$,
we may write $u'' \equiv zx \equiv v''$ for some word $x$. Now we have
\begin{align*}
u = X Y u' = XY z_1 u'' &\equiv XY z_1 z x = XYZ x \\
&\equiv \ol{XYZ} x = \ol{XY} z_2 z x \equiv \ol{XY} z_2 v'' = \ol{XY} v' = v.
\end{align*}
It remains to show that $u \equiv v$ implies that one of
the conditions (1)-(6) holds. To this end, suppose $u \equiv v$;
then there is a rewrite sequence taking $u$ to $v$. 
By Lemma~\ref{lemma_overlap}, every term in this sequence will have prefix
which is a complement of $XY$, and this prefix can only be modified by
the application of a relation, both sides of which are complements of
$XYZ$, in the obvious place. We now prove the claim by case analysis.

By Lemma~\ref{lemma_overlap}, $v$ begins either with $XY$ or with some
proper complement $\ol{XY}$.
Consider first the case in which $v$ begins with $XY$; we split this into
two further cases depending on whether $u$ and $v$ both begin with the full
relation word $XYZ$; these will correspond respectively to conditions (1)
and (2) in the statement of the lemma.

\textbf{Case (1).} Suppose $u = XYZ u''$ and $v = X Y Z v''$.
Then clearly there is a rewrite sequence taking $u$ to $v$ which by
Lemma~\ref{lemma_overlap} can be
broken up as:
\begin{align*}
u &= XYZ u'' = X_0 Y_0 Z_0 u'' \to^* X_0 Y_0 Z_0 u_1 \to X_1Y_1Z_1 u_1 \to^* X_1 Y_1 Z_1 u_2 \\
&\to X_2 Y_2 Z_2 u_2 \to^* \dots \to X_n Y_n Z_n u_n \to^* X_n Y_n Z_n v'' = XYZ v'' = v
\end{align*}
where each prefix $X_i Y_i Z_i$ is a complement of $XYZ$, and 
none of the steps in the sequences indicated by $\to^*$ involves rewriting
a relation word overlapping with the prefix $X_i Y_i$.
It follows that there are rewrite sequences.
$$Z u'' \to^* Z u_1, \ Z_1 u_1 \to^* Z_1 u_2, \ Z_2 u_2 \to^* Z_2 u_3, \ \dots, \ Z_n u_n \to^* Z_n v''$$
Now by Corollary~\ref{cor_eitheror}, we have $Z_i u'' \equiv Z_i v''$ for
some $1 \leq i \leq n$, where $Z_i$ is a complement of $Z$ as required to
show that condition (1) holds.

\textbf{Case (2).} Suppose now that $u = X Y u'$, $v = XY v'$ and $Z$
fails to be a prefix of at least one of $u'$ and $v'$. We must show that
$u' \equiv v'$; suppose for a contradiction that this does not hold.
We again consider rewrite sequences
from $u = XY u'$ to $v = XY v'$. Again using Lemma~\ref{lemma_overlap}, we
see that there is either (i) such a sequence taking $u$ to $v$ containing
no rewrites of relation words overlapping the prefix $XY$, or (ii) such a
sequence taking $u$ to $v$ which can be broken up as:
\begin{align*}
u &= XY u' = X_0 Y_0 u'' \to^* X_0 Y_0 Z_0 u_1 \to X_1Y_1Z_1 u_1 \to^* X_1 Y_1 Z_1 u_2 \\
&\to X_2 Y_2 Z_2 u_2 \to^* \dots \to X_n Y_n Z_n u_n \to^* X_n Y_n Z_n v'' = X_n Y_n v' = XY v' = v
\end{align*}
where each prefix $X_i Y_i Z_i$ is a complement of $XYZ$, and 
none of the steps in the sequences indicated by $\to^*$ involves rewriting
a relation word overlapping with the prefix $X_i Y_i$.
In case (i) there is clearly a rewrite sequence
taking $u'$ to $v'$ so that $u' \equiv v'$ as required. In case (ii), there
are rewrite sequences.
$$u' \to^* Z u_1, \ Z_1 u_1 \to^* Z_1 u_2, \ Z_2 u_2 \to^* Z_2 u_3, \ \dots, \ Z_n u_n = Z u_n \to^* v'$$
Now if $u'$ does not begin with $Z$, we can deduce from
Proposition~\ref{prop_inactive} that $u_1$ is $Z$-active.
By Corollary~\ref{cor_eitheror}, we have $\hat{Z} u_1 \equiv \hat{Z} u_n$
for some complement $\hat{Z}$ of $Z$. Since $u_1$ is
$Z$-active, Corollary~\ref{cor_actsame} tells us that we also have
$Z u_1 \equiv Z u_n$. But now
$$u' \equiv Z u_1 \equiv Z u_n \equiv v'$$
so condition (2) holds. A similar argument applies if
$v'$ does not begin with $Z$.

\textbf{Case (3).} Suppose $u = XYZ u''$ and
$v = \ol{XYZ} v''$.
Then $u = XYZ u'' \equiv v \equiv XYZ v''$, so by the same argument as in case (1) we
have either $Zu'' \equiv Z v''$ or $\ol{Z} u'' \equiv \ol{Z} v''$ as required
to show that condition (3) holds.

\textbf{Case (4).} Suppose $u = XY u'$ and
 $v = \ol{XYZ} v''$ but $Z$ is not a prefix of $u'$. Then
$u = XY u' \equiv v \equiv XYZ v''$. Now applying the same argument as
in case (2) (with $XYZ v''$ in place of $v$ and setting $v' = Zv''$) we
have $u' \equiv v' = Z v''$ so that condition (4) holds.

\textbf{Case (5).} Suppose $u = XYZ u''$, $v = \ol{XY} v'$
but $\ol{Z}$ is not a prefix of $v'$. Then we have
$\ol{XYZ} u'' \equiv u \equiv v = \ol{XY} v'$, and moreover,
Lemma~\ref{lemma_staysclean} guarantees that $\ol{XY}$ is a clean overlap
prefix of $\ol{XYZ} u''$. Now applying the same
argument as in case (1) (but with $\ol{XYZ} u''$ in place of $u$ and
setting $u' = \ol{Z} u''$) we
obtain $u' \equiv v' = \ol{Z} u''$ so that condition (5) holds.

\textbf{Case (6).} Suppose $u = XY u'$, $v = \ol{XY} v'$ and that $Z$ is not a
prefix of $u'$ and $\ol{Z}$ is not a prefix of $v'$.
It follows this time that there is a rewrite sequence taking $u$ to $v$ of
the form
\begin{align*}
u = XY u' = & X_0 Y_0 u' \to^* X_0 Y_0 Z_0 u_1 \to X_1Y_1Z_1 u_1 \to^* X_1 Y_1 Z_1 u_2 \\
&\to X_2 Y_2 Z_2 u_2 \to^* \dots \to X_n Y_n Z_n u_n \to^* X_n Y_n v' = \ol{XY} v' = v
\end{align*}
where once more by Lemma~\ref{lemma_overlap}
each prefix $X_i Y_i Z_i$ is a complement of $XYZ$, and 
none of the steps in the sequences indicated by $\to^*$ involves rewriting
a relation word overlapping with the prefix $X_i Y_i$.
Now there are rewrite sequences.
$$u' \to^* Z u_1, \ Z_1 u_1 \to^* Z_1 u_2, \ Z_2 u_2 \to^* Z_2 u_3, \ \dots, \ Z_n u_n = \ol{Z} u_n \to^* v'$$
Notice that, since $u'$ does not begin with $Z$, we may deduce from
Proposition~\ref{prop_inactive} that $u_1$ is $Z$-active.
By Corollary~\ref{cor_eitheror}, we have $\hat{Z} u_1 \equiv \hat{Z} u_n$
for some complement $\hat{Z}$ of $Z$. Now since $u_1$ is
$Z$-active, Corollary~\ref{cor_actsame} tells us that we also have
$Z u_1 \equiv Z u_n$. But now
$$u' \equiv Z u_1 \equiv Z u_n$$ where $u'$ does not begin with $Z$, and
also $v' \equiv \ol{Z} u_n$ were $v'$ does not begin with $\ol{Z}$. By
applying Proposition~\ref{prop_inactive} twice, we deduce that $u_n$ is both
$Z$-active and $\ol{Z}$-active.

Let $z$ be the maximal common suffix of $Z$ and $\ol{Z}$. Then
applying Proposition~\ref{prop_coactive} (with $p_1 = Z$ and $p_2 = \ol{Z}$),
we see that $z$ is non-empty and
\begin{itemize}
\item $u' = z_1 u''$ where $Z = z_1 z$ and $u'' \equiv z u_n$; and
\item $v' = z_2 v''$ where $\ol{Z} = z_2 z$ and $v'' \equiv z u_n$.
\end{itemize}
But then we have
$u'' \equiv z u_n \equiv v''$ and also $z$ is a possible prefix of
$u''$ as required to show that condition (6) holds.
\end{proof}

\begin{lemma}\label{lemma_eqandprefix}
Suppose $u = X Y u'$ where $XY$ is a clean overlap prefix, and suppose
$p$ is a piece. Then $u \equiv v$ and $p$ is a possible prefix of $u$
if and only if one of the following mutually exclusive conditions holds:
\begin{itemize}
\item[(1')] $u = XYZ u''$ and $v = XYZ v''$ and
$\ol{Z} u'' \equiv \ol{Z} v''$ for some complement $\ol{Z}$ of $Z$, and
also $p$ is a prefix of some complement of $X$;

\item[(2')] $u = X Y u'$, $v = X Y v'$, and $Z$ fails to be a
prefix of at least one of $u'$ and $v'$, and $u' \equiv v'$,
and also either
\begin{itemize}
\item $p$ is a prefix of $X$; or
\item $p$ is a prefix of some complement of $X$ and $Z$ is a possible prefix of $u'$.
\end{itemize}

\item[(3')] $u = X Y Z u''$, $v = \ol{X} \ol{Y} \ol{Z} v''$ for some
uniquely determined proper complement $\ol{XYZ}$ of $XYZ$, and $\hat{Z} u'' \equiv \hat{Z} v''$
for some complement $\hat{Z}$ of $Z$, and
$p$ is a prefix of some complement of $X$;

\item[(4')] $u = X Y u'$, $v = \ol{X} \ol{Y} \ol{Z} v''$ for some uniquely
determined proper
complement $\ol{XYZ}$ of $XYZ$, but
$Z$ is not a prefix of $u'$ and $u' \equiv Z v''$, and also
$p$ is a prefix of some complement of $X$;

\item[(5')] $u = X Y Z u''$, $v = \ol{X} \ol{Y} v'$ for some uniquely
determined proper
complement $\ol{XYZ}$ of $X$, 
but $\ol{Z}$ is not a prefix of $v'$ and $\ol{Z} u'' \equiv v'$,
and also $p$ is a prefix of some complement of $X$;

\item[(6')] $u = X Y u'$, $v = \ol{X} \ol{Y} v'$ for some uniquely
determined proper
complement $\ol{XYZ}$ of $XYZ$, $Z$ is not
a prefix of $u'$ and $\ol{Z}$ is not a prefix of $v'$, but
$Z = z_1 z$, $\ol{Z} = z_2 z$, $u' = z_1 u''$, $v' = z_2 v''$ where
$u'' \equiv v''$, $z$ is the maximal common suffix of $Z$ and $\ol{Z}$,
$z$ in non-empty, $z$ is a possible prefix of $u''$, and
also $p$ is a prefix of some complement of $X$.
\end{itemize}
\end{lemma}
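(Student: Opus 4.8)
The plan is to deduce the statement from Lemma~\ref{lemma_eq} together with a single auxiliary fact characterising possible piece-prefixes of a word with a clean overlap prefix. By Lemma~\ref{lemma_eq}, the hypothesis $u \equiv v$ is equivalent to the disjunction of the mutually exclusive conditions (1)--(6) there, and each condition (i$'$) here is exactly condition (i) augmented by an extra requirement on $p$; since (i$'$) implies (i), mutual exclusivity of (1$'$)--(6$'$) is inherited. It therefore suffices to prove the auxiliary claim: \emph{if $u = XYu'$ with $XY$ a clean overlap prefix and $p$ is a piece, then $p$ is a possible prefix of $u$ if and only if either $p$ is a prefix of $X$, or $p$ is a prefix of some complement of $X$ and $Z$ is a possible prefix of $u'$.} I would then observe that in each of the cases (1), (3), (4), (5), (6) the shape of $u$ forces $u \equiv XYZ w$ for a suitable word $w$ (it already begins with $XYZ$ in (1), (3), (5), while $u' \equiv Zv''$ in (4) and $u' \equiv Zw_1$ in (6)), so that $Z$ is automatically a possible prefix of $u'$. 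Recalling that $X$ counts as a complement of itself, the auxiliary condition then collapses to ``$p$ is a prefix of some complement of $X$'', matching (1$'$), (3$'$), (4$'$), (5$'$), (6$'$); in case (2) it is verbatim the requirement of (2$'$).

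For the forward direction of the auxiliary claim I would suppose $pw \equiv u$ for some word $w$. Since $XY$ is a clean overlap prefix of $u$, Lemma~\ref{lemma_overlap} applies to $u \equiv pw$ (with empty leading word) and shows that $pw$ has an overlap prefix $\ol{XY}$ for some complement $\ol{XYZ}$ of $XYZ$. As $p$ is also a prefix of $pw$, the words $p$ and $\ol{XY}$ are prefix-comparable. The crucial combinatorial observation is that $p$ must be a prefix of $\ol{X}$: were $p$ strictly longer than $\ol{X}$, then some prefix of $\ol{XY}$ strictly longer than $\ol{X}$ would occur as a factor of the piece $p$ and hence would itself be a piece, contradicting the fact that $\ol{X}$ is the \emph{maximal} piece prefix of $\ol{XYZ}$. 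If $\ol{X} = X$ this gives the first alternative. Otherwise $\ol{XYZ}$ is a proper complement, so $pw$ begins with a proper complement $\ol{XY}$ of $XY$, and applying Lemma~\ref{lemma_eq} to the pair $(u, pw)$ now places us in one of its cases (3)--(6); in each of these one reads off directly that $Z$ is a possible prefix of $u'$, yielding the second alternative.

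For the converse I would argue as follows. If $p$ is a prefix of $X$ then $p$ is trivially a possible prefix of $u = XYu'$. If instead $p$ is a prefix of a complement $\ol{X}$ of $X$, say $\ol{X} = ps$, and $Z$ is a possible prefix of $u'$, say $Zw_0 \equiv u'$, then, using that a relation word and its complement represent the same element,
\[
u = XYu' \equiv XYZw_0 \equiv \ol{XYZ}\,w_0 = p\,s\,\ol{Y}\,\ol{Z}\,w_0,
\]
so $p$ is a possible prefix of $u$. This establishes the auxiliary claim, and combined with the case-by-case collapse described above it yields the lemma.

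The main obstacle is the forward direction of the auxiliary claim, and specifically the passage from ``$p$ is a prefix of some complement of $X$'' to the conclusion that $Z$ is a possible prefix of $u'$. The prefix-comparability and the maximality contradiction are routine once Lemma~\ref{lemma_overlap} is invoked, but the genuinely delicate point is to recognise that the witness $pw$ for $p$ being a possible prefix is itself a word equivalent to $u$ whose leading complement can be analysed by a \emph{second} application of Lemma~\ref{lemma_eq}, applied to the pair $(u, pw)$ rather than to $(u, v)$. It is precisely this change of companion word that lets us extract information about $u'$ instead of about $v'$.
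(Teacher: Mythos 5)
Your proposal is correct, and no step in it fails; but it does not simply replicate the paper's argument, so a comparison is in order. The shared skeleton: like the paper, you reduce everything to Lemma~\ref{lemma_eq} plus an analysis of when the piece $p$ can be a possible prefix; like the paper, you use Lemma~\ref{lemma_overlap} together with the maximality of the piece prefix $\hat{X}$ of a complement (and the implicit fact that a factor of a piece is again a piece) to force $p$ to be a prefix of some complement of $X$; and you verify the easy direction by the same computation $u = XYu' \equiv XYZw \equiv \ol{XYZ}\,w$. The genuine divergence is at exactly the step you single out as delicate: proving, when condition (2) holds and $p$ is not a prefix of $X$, that $Z$ must be a possible prefix of $u'$. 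The paper does this with a bespoke rewrite-sequence argument: by Lemma~\ref{lemma_overlap} the prefix $XY$ of $u$ can only be altered by applying a relation $(XYZ,\ol{XYZ})$ in place, so any derivation reaching a word with a proper-complement prefix must first turn $u$ into a word of the form $XYZw$, and the shortest sequence achieving this leaves the prefix $XY$ untouched until its final step, whence the same steps rewrite $u'$ to $Zw$. You instead apply Lemma~\ref{lemma_eq} a second time, to the pair $(u, pw)$: since $pw$ begins with a proper complement $\ol{XY}$ (to exclude cases (1)--(2) here you implicitly use the observation, recorded in the proof of Lemma~\ref{lemma_eq}, that no complement of $XY$ is a prefix of another), one of cases (3)--(6) applies, and in each of them $Z$ is visibly a possible prefix of $u'$. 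This substitution is legitimate --- Lemma~\ref{lemma_eq} is fully proved before this lemma, so there is no circularity --- and it buys a cleaner, more modular proof: your auxiliary claim concerns only $u$ and $p$, never mentions $v$, and reuses the case analysis of Lemma~\ref{lemma_eq} rather than re-opening the rewrite-sequence machinery. What the paper's route buys in exchange is an explicit construction of the witness $Zw$ from a derivation starting at $u$, keeping that step self-contained at the level of rewritings; your version instead leans on the full strength of the earlier lemma, applied to a cleverly chosen second companion word. One further small point in your favour: your case split on whether $\ol{X} = X$ as a word (rather than on whether $\ol{XYZ} = XYZ$) correctly handles the possibility of a proper complement sharing its maximal piece prefix with $XYZ$.
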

\begin{proof}
Mutual exclusivity of the six conditions is proved exactly as for
Lemma~\ref{lemma_eq}. Suppose now that one of the six conditions above applies. Each condition
clearly implies the corresponding condition from Lemma~\ref{lemma_eq},
so we deduce immediately that $u \equiv v$. We must show, using the fact
that $p$ is a prefix of a complement of $X$, that $p$ is a possible prefix
of $u$, or equivalently of $v$.

In case (1'), $p$ is clearly a possible prefix of $u = XYZu''$, and cases
(3'), (4') and (5') are entirely similar.
In case (2'), if $p$ is a prefix of $X$ then
it is already a prefix of $u$, while if $p$ is a prefix of a proper
complement $\ol{X}$ of $X$ and $Z$ is a
possible prefix of $u'$, say $u' \equiv Z w$, then
$$u \ = \ XYu' \ \equiv \ XYZw \ \equiv \ \ol{XYZ} w$$
where the latter has $p$ as a possible prefix.
Finally, in case (6') we know that $z$ is a possible prefix of $u''$, say
$u'' \equiv z x$, so we have
$$u = XYu' = XYz_1u'' = XYz_1zx = XYZx$$
and it is again clear that $p$ is a possible prefix of $u$.

Conversely, suppose $u \equiv v$ and $p$ is a possible prefix of $u$. Then
exactly one of the six conditions in Lemma~\ref{lemma_eq} applies. By
Lemma~\ref{lemma_overlap}, every word equivalent to $u$ begins with a
complement of $XY$, so $p$ must be a prefix of a word beginning with
some complement $\hat{X} \hat{Y}$. Since $\hat{X}$ is the maximal piece prefix of
$\hat{X} \hat{Y} \hat{Z}$ and $\hat{Y}$ is non-empty, it
follows that $p$ is a prefix of $\hat{X}$. If any but condition (2)
of Lemma~\ref{lemma_eq} is satisfied, this suffices to show
that the corresponding condition from the statement of
Lemma~\ref{lemma_eqandprefix} holds.

If condition (2) from Lemma~\ref{lemma_eq} applies, we must show
additionally that either $p$ is a prefix of $X$, or that $Z$ is a
possible prefix of $u'$. Suppose $p$ is not
a prefix of $X$. Then by the above, $p$ is a prefix of some complement
$\hat{X}$. It follows from Lemma~\ref{lemma_overlap}, that the
only way the prefix $XY$ of the word $u$ can be changed using the defining
relations is by application of
a relation of the form $(XYZ, \ol{XYZ})$. In order for this to happen, one must
clearly be able to rewrite $u = XYu'$ to a word of the form $XYZ w$;
consider the shortest possible rewrite sequence which achieves this.
By Lemma~\ref{lemma_overlap}, no term in the sequence except for the last
term will contain a relation word overlapping the initial $XY$. It follows
that the same rewriting steps rewrite $u'$ to $Zw$, so that $Z$ is a
possible prefix of $u'$, as required.
\end{proof}

\section{Applications}\label{sec_apps}

The main application presented in \cite{K_smallover1} was for each 
strongly $C(4)$ monoid presentation, a linear time recursive algorithm to decide,
given words $u$, $v$ and a piece $p$, whether $u \equiv v$ and $p$ is
a possible prefix of $u$. In particular, by fixing $p = \epsilon$, we
obtain an algorithm which
solves the word problem for the presentation in linear time.
Figure~1 shows a modified version of the algorithm which works for weakly
$C(4)$ presentations. The proofs of correctness and
termination are essentially the same as those in \cite{K_smallover1}, but
relying on the more general results of Section~\ref{sec_main}. Thus, we
establish the following theorem.

\begin{theorem}\label{thm_lineartime}
For every weakly $C(4)$ finite monoid presentation, there exists a
two-tape Turing machine which solves the corresponding word problem in
time linear in the lengths the input words.
\end{theorem}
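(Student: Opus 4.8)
The plan is to solve a slightly more general problem than the word problem, namely: given words $u$, $v$ and a piece $p$, decide whether \emph{both} $u \equiv v$ and $p$ is a possible prefix of $u$. Taking $p = \epsilon$ recovers the word problem, since every word has $\epsilon$ as a possible prefix. I would present a recursive procedure for this general problem (the algorithm of Figure~1) whose structure mirrors exactly the case analysis of Lemma~\ref{lemma_eqandprefix}. Because the presentation is fixed and finite, all of the relevant combinatorial data --- the set of pieces, the factorisations $R = X_R Y_R Z_R$, the complement relation, and a finite automaton recognising relation prefixes, overlap prefixes and clean overlap prefixes --- is a constant-size object that can be hard-coded into the finite control of the machine, so that no genuine run-time preprocessing is needed.

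For correctness I would argue by induction. First test whether $u$ has a clean overlap prefix. If it does not, then by Corollary~\ref{cor_nomopnorel} we have $u \equiv v$ if and only if $u = v$, and moreover $p$ is then a possible prefix of $u$ if and only if it is a literal prefix; this base case is settled by direct string comparison. If $u$ does have a clean overlap prefix, I would locate the shortest one, $XY$, recover the factors $X$, $Y$, $Z$ of the relation word $XYZ$, and inspect the bounded-length prefixes of $u$ and $v$ against $XYZ$ and the finitely many complements of $XYZ$. This inspection determines which, if any, of the mutually exclusive and exhaustive conditions (1')--(6') of Lemma~\ref{lemma_eqandprefix} can apply. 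Each condition reduces the instance to an equivalence (and possible-prefix) test on words obtained from $u$ and $v$ by deleting a prefix of length at least $|XY| \geq 1$ and re-prepending only a bounded-length piece (one of $\ol{Z}$, $\hat{Z}$, or a fresh short piece $z$). Correctness is then immediate from Lemma~\ref{lemma_eqandprefix}, since the answer to the original instance is determined by the local prefix checks together with the recursively computed answers.

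For the linear-time bound and the two-tape realisation, the key observation is that the two tape heads, scanning $u$ and $v$, advance \emph{monotonically} from left to right. Each recursive step consumes a prefix of the current word of length at least $|XY| \geq 1$ (indeed at least $|XYZ|$ when the full relation word is stripped), while the short piece re-prepended to the remainder is retained in the finite control rather than re-written to the tape. Consequently the heads never backtrack, the total forward motion is bounded by $|u| + |v|$, and the look-ahead required to detect the shortest clean overlap prefix and read off $X$, $Y$, $Z$ is bounded by the maximum length of a relation word, a constant. Charging the constant per-step overhead against the prefix consumed at that step then makes the total running time telescope to $O(|u| + |v|)$.

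The main obstacle I expect is precisely this running-time accounting, rather than the correctness argument, which is a transcription of Lemma~\ref{lemma_eqandprefix}. Two points need care. First, one must verify that the shortest clean overlap prefix and its factorisation really can be found with only bounded look-ahead, so that each step's cost is genuinely charged against the prefix it strips; this rests on Propositions~\ref{prop_overlapprefixnorel}--\ref{prop_opgivesmop} together with the bounded length of relation words. Second, several of the conditions of Lemma~\ref{lemma_eqandprefix} demand auxiliary possible-prefix tests (for instance, whether $Z$ is a possible prefix of $u'$ in case (2'), or the handling of the fresh piece $z$ in case (6')), and one must check that these fold into the same single left-to-right pass without spawning a sub-recursion of super-linear cost; here I would appeal to Propositions~\ref{prop_dumpprefix} and~\ref{prop_inactive}--\ref{prop_coactive} exactly as in \cite{K_smallover1} to keep each auxiliary test local and of constant cost. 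Granting these, the proof is identical to that of \cite{K_smallover1}, with the more general Lemmas~\ref{lemma_eq} and~\ref{lemma_eqandprefix} substituted for their strongly $C(4)$ counterparts.
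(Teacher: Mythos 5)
Your proposal is correct and follows essentially the same route as the paper: the paper likewise proves the theorem by exhibiting the recursive procedure of Figure~1 for the generalised problem (deciding $u \equiv v$ together with $p$ being a possible prefix), whose case structure transcribes Lemma~\ref{lemma_eqandprefix}, and then defers correctness, termination and the linear-time two-tape implementation to the arguments of \cite{K_smallover1} with the generalised lemmas of Section~\ref{sec_main} substituted. Your fleshing-out of the timing argument (monotone heads, bounded look-ahead, bounded pieces kept in finite control, charging each step against the consumed prefix) is precisely the accounting used there.
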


\begin{figure}
\begin{codebox}
\Procname{$\proc{WP-Prefix}(u, v, p)$}
\li     \If $u = \epsilon$ or $v = \epsilon$
\li         \Then \If $u = \epsilon$ and $v = \epsilon$ and $p = \epsilon$  \label{li_start_a}
\li             \Then \Return \const{Yes}                \label{li_allepsilon}
\li             \Else \Return \const{No}                 \label{li_someepsilon}
            \End \label{li_end_a}
\li     \ElseIf $u$ does not have the form $XYu'$ with $XY$ a clean overlap prefix
\li     \Then \If $u$ and $v$ begin with different letters \label{li_start_b}
\li         \Then \Return \const{No}                     \label{li_uvdifferentstart}
\li        \ElseIf $p \neq \epsilon$ and $u$ and $p$ begin with
different letters
\li         \Then \Return \const{No}                     \label{li_updifferentstart}
\li         \ElseNoIf
\li       $u \gets u$ with first letter deleted
\li      $v \gets v$ with first letter deleted
\li      \If $p \neq \epsilon$
\li          \Then $p \gets p$ with first letter deleted
         \End
\li      \Return $\proc{WP-Prefix}(u,v,p)$   \label{li_rec_nomop}
\End \label{li_end_b}

\li \ElseNoIf
\li $\kw{let}\  X, Y, u'$ be such that $u = XY u'$ \label{li_start_c}

\li \If $p$ is not a prefix of a complement of $X$
\li \Then \Return \const{No} \label{li_pnotprefix}

\li \ElseIf $v$ does not begin with a complement of $XY$
\li \Then \Return \const{No} \label{li_vstartswrong}

\li \ElseIf $u = XYZ u''$ and $v = \ol{XYZ} v''$ for some complement $\ol{XYZ}$ of $XYZ$
\li    \Then \If $u''$ is $\hat{Z}$-active for some complement $\hat{Z}$ of $Z$
\li       \Then \Return $\proc{WP-Prefix}(\hat{Z} u'', \hat{Z} v'', \epsilon)$ for some such $\hat{Z}$ \label{li_rec_case1b}
\li       \Else \Return $\proc{WP-Prefix}(Z u'', Z v'', \epsilon)$ \label{li_rec_case1a}
       \End

\li \ElseIf $u = XY u'$ and $v = XY v'$
\li     \Then \If $p$ is a prefix of $X$
\li         \Then \Return $\proc{WP-Prefix}(u',v', \epsilon)$ \label{li_rec_case2a}
\li         \Else \Return $\proc{WP-Prefix}(u',v', Z)$ \label{li_rec_case2b}
        \End

\li \ElseIf $u = XY u'$ and $v = \ol{XYZ} v''$ for some complement $\ol{XYZ}$ of $XYZ$
\li     \Then \Return $\proc{WP-Prefix}(u', Z v'', \epsilon)$ \label{li_rec_case4}

\li \ElseIf $u = XYZ u''$ and $v = \ol{XY} v'$ for some complement $\ol{XYZ}$ of $XYZ$
\li     \Then \Return $\proc{WP-Prefix}(\ol{Z} u'', v', \epsilon)$ \label{li_rec_case5}

\li \ElseIf $u = XY u'$ and $v = \ol{XY} v'$ for some complement $\ol{XY}$ of $XY$
\li     \Then \kw{let} $z$ be the maximal common suffix of $Z$ and $\ol{Z}$
\li           \kw{let} $z_1$ be such that $Z = z_1 z$
\li           \kw{let} $z_2$ be such that $\ol{Z} = z_2 z$
\li           \If $u'$ does not begin with $z_1$ or $v'$ does not begin with $z_2$;
\li               \Then \Return \const{NO} \label{li_case6no}
\li               \Else \kw{let} $u''$ be such that $u' := z_1 u''$
\li                     \kw{let} $v''$ be such that $v' := z_2 v''$;
\li                     \Return $\proc{WP-Prefix}(u'', v'', z)$ \label{li_rec_case6} \label{li_end_c}
              \End
        \End
    \End
\end{codebox}
\caption{Algorithm to solve the word problem for a fixed weakly $C(4)$ presentation.} \label{fig_algorithm}

\end{figure}

The algorithms presented \cite[Section~5]{K_smallover1} for finding
the pieces of a presentation and hence testing strong small overlap conditions
may clearly also be used to test the weak variants of those conditions, with
the proviso
that one considers the \textit{set} of relation words in the presentation,
with any duplicates disregarded. In particular, we have:

\begin{corollary}
There is a RAM algorithm which, given as input a finite presentation $\langle \scrA \mid \scrR \rangle$,
decides in time $O(|\scrR|^2)$ whether the presentation is weakly $C(4)$.
\end{corollary}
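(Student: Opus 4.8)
The plan is to reduce the problem of testing the weakly $C(4)$ condition to computing pieces, and then to bound the cost of that computation. By definition, a presentation is weakly $C(4)$ if no relation word can be written as a product of strictly fewer than four pieces. The key observation is that the weak and strong conditions differ only in the treatment of \emph{repeated} relation words, and that repetitions are irrelevant to which \emph{words} arise as factors of two distinct relation words. Thus I would first pass to the \emph{set} $\scrR'$ of relation words, discarding any duplicate occurrences, and then apply the piece-finding machinery of \cite[Section~5]{K_smallover1} to $\scrR'$. Since removing duplicates can only decrease the length, the total length of the relation words in $\scrR'$ is at most $|\scrR|$, so no asymptotic penalty is incurred.

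\textbf{The main technical input} is the algorithm from \cite[Section~5]{K_smallover1}, which I would invoke to identify, for each relation word, the pieces occurring as factors. The essential point to verify is that running that algorithm on the set $\scrR'$ (rather than the sequence $\scrR$) computes exactly the pieces in the sense of this paper's revised definition, namely factors shared between two \emph{distinct} relation words or occurring twice within a single side. This is immediate once duplicates have been removed, since then ``distinct members of the sequence'' and ``distinct members of the set'' coincide. I would then use the computed piece information to test, for each relation word $R$, whether $R$ factors as a product of at most three pieces; the presentation is weakly $C(4)$ precisely when no such factorisation exists for any $R$.

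\textbf{For the complexity bound}, I expect the dominant cost to be the piece-finding step, which \cite[Section~5]{K_smallover1} carries out in time $O(|\scrR|^2)$ on a RAM; the deduplication preprocessing and the final factorisation check can each be performed within this bound (for instance by the obvious interval-covering argument over the positions at which maximal pieces begin and end). The total running time is therefore $O(|\scrR|^2)$ as claimed.

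\textbf{The step I expect to be the main obstacle} is confirming that the piece-finding algorithm of \cite{K_smallover1}, which was designed for the strong condition, faithfully computes pieces in the revised sense after deduplication, and in particular that no extra pieces are spuriously introduced or genuine ones lost. Once this correspondence is pinned down, the corollary follows directly from the cited complexity analysis, and indeed the proof sketch in the paragraph preceding the statement already indicates that the only modification required is to work with the set of relation words rather than the sequence.
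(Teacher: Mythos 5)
Your proposal is correct and matches the paper's own argument: the paper likewise observes that the piece-finding and condition-testing algorithms of \cite[Section~5]{K_smallover1} apply verbatim once one works with the \emph{set} of relation words (duplicates disregarded), since after deduplication the strong and weak notions of piece coincide. Your additional remarks on the deduplication cost and the factorisation check are sound elaborations of this same route rather than a different approach.
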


\begin{theorem}\label{thm_ramuniform}
There is a RAM algorithm which, given as input a weakly $C(4)$ finite presentation
$\langle \scrA \mid \scrR \rangle$ and two words $u, v \in \scrA^*$, decides whether
$u$ and $v$ represent the same element of the semigroup presented in
time
$$O \left( |\scrR|^2 \min(|u|,|v|) \right).$$
\end{theorem}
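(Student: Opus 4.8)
The plan is to run the recursive procedure \textsc{WP-Prefix} of Figure~\ref{fig_algorithm} on the input $(u,v,\epsilon)$, this time treating the presentation $\langle\scrA\mid\scrR\rangle$ as part of the input rather than as fixed. Specialising to $p=\epsilon$, Lemma~\ref{lemma_eqandprefix} (and hence Lemma~\ref{lemma_eq}) already certifies that the procedure returns the correct answer to the question ``$u\equiv v$?'', so the correctness argument is inherited verbatim from the fixed-presentation case underlying Theorem~\ref{thm_lineartime}. The entire content of the proof is therefore a running-time analysis on a RAM, which I would split into three parts: a preprocessing phase, a bound on the number of recursive calls, and a bound on the cost of a single call expressed in terms of $|\scrR|$.

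First I would preprocess the presentation. By the algorithms of \cite[Section~5]{K_smallover1} invoked in the preceding corollary, in time $O(|\scrR|^2)$ one computes the set of pieces, the factorisation $X_R Y_R Z_R$ of every relation word $R$, and the complement relation on relation words. From these I would build, still within $O(|\scrR|^2)$, the lookup tables that the body of \textsc{WP-Prefix} consults: a structure that locates a clean overlap prefix and returns its block $X$, $Y$ together with the corresponding $Z$, and tables enumerating the complements of $X$ and of $XYZ$ and their maximal piece suffixes $\ol Z$ and $\hat Z$. Since we may assume $\min(|u|,|v|)\ge 1$, the empty-word cases being disposed of directly, this additive $O(|\scrR|^2)$ term is absorbed into the claimed bound.

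Next I would bound the number of recursive calls. Each call either strips one letter from the front of each of $u$ and $v$ (line~\ref{li_rec_nomop}) or enters the main case analysis, where it removes the leading clean-overlap material from both words before recursing; the termination argument is exactly that of \cite{K_smallover1,K_smallover2} and yields $O(\min(|u|,|v|))$ calls. The point I would stress is that the two words are consumed in lockstep: at each call the algorithm inspects only a window of $O(|\scrR|)$ letters at the current front of each of $u$ and $v$, and it halts the instant either word is exhausted (lines~\ref{li_start_a}--\ref{li_end_a}). Consequently it never needs to scan the whole of the longer word, and it is the length of the shorter word, rather than $|u|+|v|$, that controls the recursion.

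The remaining step, which I expect to be the main obstacle, is to bound the cost of a single call by $O(|\scrR|^2)$ against the precomputed structures. Each primitive operation in the body of \textsc{WP-Prefix} --- locating the clean overlap prefix $XY$ and reading off $X$, $Y$, $Z$; testing whether $p$ is a prefix of a complement of $X$ (line~\ref{li_pnotprefix}); testing whether $v$ begins with a complement of $XY$ (line~\ref{li_vstartswrong}); deciding which of cases (1)--(6) applies; and computing $\hat Z$, $z$, $z_1$, $z_2$ --- is a constant-time operation for a fixed presentation, but must now be re-expressed as a prefix or membership query over the relation words, whose combined length is $O(|\scrR|)$. Each such query touches $O(|\scrR|)$ letters at the front of $u$ and $v$ and resolves to a constant number of table lookups, costing $O(|\scrR|^2)$ in the most naive implementation that compares the window directly against every relation word. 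The delicate part is checking that every query raised by the procedure is answered correctly and within this budget by the preprocessed data; this is precisely the task of uniformising the constant-time queries used in the analysis of \cite{K_smallover1}. Combining the three parts, the total running time is $O(|\scrR|^2)+O(\min(|u|,|v|))\cdot O(|\scrR|^2)=O\!\left(|\scrR|^2\min(|u|,|v|)\right)$, as required.
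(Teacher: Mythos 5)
Your overall architecture (preprocess the presentation in $O(|\scrR|^2)$ time, run \textsc{WP-Prefix} on $(u,v,\epsilon)$, and multiply a per-call cost by a call count) is the same as the paper's, which gives no explicit argument but defers to the uniform analysis of \cite[Section~5]{K_smallover1} applied to the modified algorithm of Figure~\ref{fig_algorithm}. However, your central quantitative claim --- that there are $O(\min(|u|,|v|))$ recursive calls because ``each call strips a non-empty prefix from both words'' and the words are ``consumed in lockstep'' --- is false. The recursions at lines \ref{li_rec_case1b}, \ref{li_rec_case1a}, \ref{li_rec_case4} and \ref{li_rec_case5} do not strip prefixes: they \emph{replace} the prefix $XYZ$ of $u$ and the complement prefix $\ol{XYZ}$ of $v$ by a piece ($Z$, $\hat{Z}$ or $\ol{Z}$), i.e.\ they prepend material. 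Since the two sides of a relation may have wildly different lengths, that piece can be longer than the complement prefix it replaces, so an argument --- and $\min(|u|,|v|)$ --- can grow. Concretely, take the relations $(aqbz,\,c)$ and $(dze,\,f)$, where $z$ is a long word and $a,q,b,c,d,e,f$ are letters occurring nowhere else; this presentation is weakly (indeed strongly) $C(4)$, with $X_{aqbz}=\epsilon$, $Y_{aqbz}=aqb$, $Z_{aqbz}=z$, and $c$ a complement of $aqbz$. On input $u=aqbz\,u''$, $v=c$, the first call takes the branch at line \ref{li_rec_case1a} and recurses on $(zu'',\,z,\,\epsilon)$, after which there are $|z|$ letter-stripping calls: about $|z|+2$ calls in total, while $\min(|u|,|v|)=1$.

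This breaks your accounting and not merely its justification: the true call count is of order $\min(|u|,|v|)+|\scrR|$ rather than $\min(|u|,|v|)$, so combined with your naive per-call budget of $O(|\scrR|^2)$ you obtain only $O\bigl(|\scrR|^2\min(|u|,|v|)+|\scrR|^3\bigr)$, which exceeds the claimed bound whenever $|\scrR|$ is large compared with $\min(|u|,|v|)$. To recover the theorem you must do one of two things: (i) sharpen the per-call cost to $O(|\scrR|)$ --- for instance by building, during the $O(|\scrR|^2)$ preprocessing, string-matching structures for the $XY$-patterns and for the cleanliness and $\hat{Z}$-activity tests --- so that the additive $O(|\scrR|)$ surplus of calls costs only $O(|\scrR|^2)$, which is absorbed into the bound because $\min(|u|,|v|)\geq 1$ once the empty-word cases are dispatched; or (ii) run an amortised argument in which each prepended piece (whose length is at most that of the longest relation word) is charged against the call that created it. This prepending phenomenon and its amortisation is precisely the nontrivial content that the paper inherits from \cite{K_smallover1}; a proof that asserts lockstep consumption without confronting it has a genuine gap.
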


Just as with the algorithm from \cite{K_smallover1}, the algorithm in
Figure~\ref{fig_algorithm} is essentially a finite state process, and
can be implemented on a $2$-tape prefix-rewriting automaton using a
slight variation on the technique described in the proof of
\cite[Theorem~2]{K_smallover2}. It follows that we have:

\begin{theorem}\label{thm_main}
Let $\langle \scrA \mid \scrR \rangle$ be a finite monoid presentation which is
weakly $C(4)$. Then the relation
$$\lbrace (u, v) \in \scrA^* \times \scrA^* \mid u \equiv v \rbrace$$
is deterministic rational and reverse deterministic rational. Moreover,
one can, starting from the presentation, effectively compute 2-tape
deterministic automata recognising this relation and its reverse.
\end{theorem}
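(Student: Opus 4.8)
The plan is to realise the recursive procedure $\proc{WP-Prefix}$ of Figure~\ref{fig_algorithm} as a deterministic two-tape prefix-rewriting automaton, following the construction used to prove \cite[Theorem~2]{K_smallover2} but relying on the more general results of Section~\ref{sec_main}. The essential point is that $\proc{WP-Prefix}$ is a finite-state process: the only data carried between recursive calls is the piece $p$ in its third argument, and since the presentation is finite there are only finitely many pieces, each being a factor of one of the finitely many relation words. Thus $p$ ranges over a finite set, which provides the state set of the automaton.

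Next I would check that each recursive call reads and rewrites only bounded-length prefixes of the two tapes. In the branch where $u$ has no clean overlap prefix, Corollary~\ref{cor_nomopnorel} justifies simply deleting the leading letter of each of $u$, $v$ and $p$ after checking agreement. In the remaining branches, Lemma~\ref{lemma_eqandprefix} shows that the decision is governed entirely by the clean overlap prefix $XY$ of $u$, the matching complement prefix of $v$ (which exists by Lemma~\ref{lemma_overlap}), and the words $Z$, $\ol{Z}$, $z$, $z_1$, $z_2$, all of length at most the maximum length of a relation word. Each such branch strips a bounded prefix from $u$ and from $v$, possibly prepends a bounded word (such as $Z$ in case~$(4')$ or $\ol{Z}$ in case~$(5')$) to the remainder of one tape, and resets $p$ to another piece or to $\epsilon$; this is exactly a bounded prefix-rewriting step on each tape. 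The conditions $(1')$--$(6')$ are mutually exclusive and determined by the bounded prefixes already read, so at each configuration exactly one transition applies and the automaton is deterministic. It accepts precisely when the recursion reaches the base case with $u = v = \epsilon$ and $p = \epsilon$. Specialising to $p = \epsilon$ shows that the relation $\lbrace (u,v) : u \equiv v \rbrace$ is deterministic rational, and the automaton is effectively computable, since the pieces and the factors $X$, $Y$, $Z$ of each relation word can be found by the methods of \cite[Section~5]{K_smallover1}.

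For the reverse assertion I would invoke the left--right symmetry of the entire theory. Reversing every word and every relation of the presentation yields another weakly $C(4)$ presentation, under which the notions of piece and clean overlap prefix, and the statements of Lemmas~\ref{lemma_eq} and~\ref{lemma_eqandprefix}, all admit exact mirror images. Running the mirror algorithm on reversed inputs therefore yields an effectively computable deterministic two-tape automaton reading the tapes from right to left, establishing that the relation is reverse deterministic rational.

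The step I expect to be the main obstacle is verifying that the asynchronous prefix manipulations of $\proc{WP-Prefix}$---in particular the cases that prepend $Z$ or $\ol{Z}$ to the unread portion of a tape---genuinely correspond to legitimate bounded prefix-rewriting transitions, so that the recursion stays finite-state throughout. This is precisely the issue addressed by the construction in \cite[Theorem~2]{K_smallover2}; the only new ingredient here is to confirm that the more general Lemma~\ref{lemma_eqandprefix} still supplies the bounded, mutually exclusive case analysis that the construction requires.
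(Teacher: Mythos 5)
Your proposal is correct and takes essentially the same route as the paper: the paper likewise proves Theorem~\ref{thm_main} by observing that the algorithm of Figure~\ref{fig_algorithm} is a finite-state process (the piece $p$ ranging over a finite set) and implementing it on a deterministic $2$-tape prefix-rewriting automaton via a slight variation of the construction in the proof of \cite[Theorem~2]{K_smallover2}, with the reverse statement obtained from the left--right duality of the weak $C(4)$ conditions. Your write-up merely makes explicit the bounded-prefix-rewriting and symmetry details that the paper delegates to that citation.
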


Just as in \cite{K_smallover2}, we obtain as corollaries large number of
other facts about weakly $C(4)$ monoids. For brevity we refrain from explaining
all terms, and instead refer the reader to \cite{K_smallover2} for definitions.
\begin{corollary}
Every monoid admitting a weakly $C(4)$ finite presentation
\begin{itemize}
\item is \textit{rational} (in the sense of Sakarovitch \cite{Sakarovitch87});
\item is word hyperbolic (in the sense of Duncan and Gilman \cite{Duncan04});
\item is asynchronous automatic;
\item has a regular language of linear-time computable normal forms (namely,
the set of words minimal in their equivalence class with respect to the
lexicographical order induced by any total order on the generating set);
\item has a boolean algebra of rational subsets;
\item has uniformly decidable rational subset membership problem; and
\item has rational subsets which coincide with its recognisable subsets.
\end{itemize}
\end{corollary}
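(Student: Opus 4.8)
The plan is to deduce every item from the single structural fact established in Theorem~\ref{thm_main}: that the word-equality relation $\{(u,v) \in \scrA^* \times \scrA^* \mid u \equiv v\}$ is deterministic rational and reverse deterministic rational, with the recognising two-tape automata effectively computable from the presentation. Each of the seven consequences was obtained in \cite{K_smallover2} for strongly $C(4)$ monoids, and in every case the derivation there used nothing about the presentation beyond this rational structure of the word relation, together with the existence of an effectively computable regular language of linear-time normal forms. Consequently the strategy is to reproduce the arguments of \cite{K_smallover2}, substituting Theorem~\ref{thm_main} and Theorem~\ref{thm_lineartime} of the present paper for their strongly $C(4)$ analogues wherever they are invoked.

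The shared ingredient feeding most of the list is the cross-section $L \subseteq \scrA^*$ consisting of those words that are minimal in their $\equiv$-class with respect to the lexicographic order induced by a fixed total order on $\scrA$. First I would observe that $L$ is regular: a word $w$ fails to lie in $L$ exactly when it is $\equiv$-equivalent to a lexicographically smaller word, and since $\equiv$ is deterministic rational this condition is recognised by a finite automaton obtained by the usual projection-and-complement construction applied to the recognising two-tape automaton. Theorem~\ref{thm_lineartime} then shows that the normal form of an input word is computable in linear time, which establishes the item on regular, linear-time computable normal forms outright. The same regular cross-section, combined with the deterministic rational structure of $\equiv$, is precisely what realises the monoid multiplication as a rational transduction relative to $L$, giving rationality in the sense of Sakarovitch \cite{Sakarovitch87}. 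The remaining three order-theoretic items then follow, as in \cite{K_smallover2}, from the general theory of rational monoids: such a monoid has a Boolean algebra of rational subsets, has rational subsets coinciding with its recognisable subsets, and has decidable rational subset membership, uniformly so since our automata are effectively computable.

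For the two remaining items I would argue as in \cite{K_smallover2}. Asynchronous automaticity follows because a deterministic rational relation is in particular asynchronously rational, so the equality relation and the right-multiplication relations relative to $L$ are all asynchronously rational; word-hyperbolicity in the sense of Duncan and Gilman \cite{Duncan04} follows by exhibiting the multiplication table over $L$ as a context-free language, using the deterministic rational word relation and its reverse exactly as there. The hard part is not in this paragraph at all: all of the genuine combinatorial content has already been discharged in proving Theorem~\ref{thm_main}. The only real obstacle is therefore the verification that none of the derivations imported from \cite{K_smallover2} secretly exploited the absence of repeated relation words, i.e.\ that each used only the rational structure of $\equiv$ and never the strong $C(4)$ hypothesis directly. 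Since Theorem~\ref{thm_main} re-establishes that structure in full generality for weakly $C(4)$ presentations, this verification is routine, and the corollary follows.
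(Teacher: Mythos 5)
Your proposal is correct and takes essentially the same route as the paper: the paper derives this corollary exactly as in \cite{K_smallover2}, with Theorem~\ref{thm_main} (and Theorem~\ref{thm_lineartime}) substituted for their strongly $C(4)$ analogues, which is precisely the strategy you describe. Your additional sketches of how each item follows from the deterministic rational structure of $\equiv$ (regular lexicographic cross-section, rational multiplication, asynchronously rational relations, context-free multiplication table) fill in detail the paper leaves to \cite{K_smallover2}, and they match the derivations given there.
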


\section*{Acknowledgements}

This research was supported by an RCUK Academic Fellowship. The author
thanks Uri Weiss for drawing his attention to the distinction between
weak and strong $C(n)$ conditions, and asking the questions answered by
this paper.

\bibliographystyle{plain}

\def\cprime{$'$} \def\cprime{$'$}

\end{document}